\numberwithin{equation}{section}
\theoremstyle{plain}
\newtheorem{theorem}{Theorem}[section]
\newtheorem{lemma}[theorem]{Lemma}
\newtheorem{corollary}[theorem]{Corollary}
\theoremstyle{definition}
\theoremstyle{remark}
\title{\bf On the strength of connectedness of a random hypergraph}
\author{Daniel Poole\thanks{The author gratefully acknowledges support from NSF grant \# DMS-1101237.}\\
\small Department of Mathematics\\[-0.8ex]
\small The Ohio State University\\[-0.8ex] 
\small Columbus, Ohio, U.S.A.\\
\small\tt poole@math.osu.edu
}
\begin{document}

\maketitle

% E-JC papers must include an abstract. The abstract should consist of a
% succinct statement of background followed by a listing of the
% principal new results that are to be found in the paper. The abstract
% should be informative, clear, and as complete as possible. Phrases
% like "we investigate..." or "we study..." should be kept to a minimum
% in favor of "we prove that..."  or "we show that...".  Do not
% include equation numbers, unexpanded citations (such as "[23]"), or
% any other references to things in the paper that are not defined in
% the abstract. The abstract will be distributed without the rest of the
% paper so it must be entirely self-contained.

\begin{abstract}
Bollob\'{a}s and Thomason (1985) proved that for each $k=k(n) \in [1, n-1]$, with high probability, the random graph process, where edges are added to vertex set $V=[n]$ uniformly at random one after another, is such that the stopping time of having minimal degree $k$ is equal to the stopping time of becoming $k$-(vertex-)connected. We extend this result to the $d$-uniform random hypergraph process, where $k$ and $d$ are fixed. Consequently, for $m=\frac{n}{d}(\ln n +(k-1)\ln \ln n +c)$ and $p=(d-1)! \frac{\ln n + (k-1) \ln \ln n +c}{n^{d-1}}$, the probability that the random hypergraph models $H_d(n, m)$ and $H_d(n, p)$ are $k$-connected tends to $e^{-e^{-c}/(k-1)!}.$

  % keywords are optional
  \bigskip\noindent \textbf{Keywords:} random hypergraph; vertex connectivity
\end{abstract}

\section{Introduction}

Let $H_d(n, p)$ denote the random $d$-uniform hypergraph with vertex set $[n]:=\{1,2,...,n\}$, where each of the ${n \choose d}$ potential (hyper)edges of cardinality $d$ is present with probability $p$, independently of all other potential edges. Likewise, let $H_d(n, m)$ be the random $d$-uniform hypergraph on $[n]$, where $m$ edges are chosen uniformly at random among all sets of $m$ potential edges. The model $H_d(n, m)$ can be gainfully viewed as a snapshot of the random hypergraph process $\{ H_d(n, \mu) \}_{\mu=0}^{n \choose d},$ where $H_d(n, \mu+1)$ is obtained from $H_d(n, \mu)$ by inserting an extra edge chosen uniformly at random among all ${n \choose d}-\mu$ remaining potential edges. For $d=2$, these models are the typical random graph models, $G(n, p)$, $G(n, m)$ and $\{G(n, \mu)\}_{\mu=0}^{{n \choose 2}}.$ 

As customary, we say that for a given $m=m(n)$ ($p$ resp.) some graph property $\mathcal{Q}$ holds {\em with high probability}, denoted {\em w.h.p.}, if the probability that $H_d(n,m)$ ($(H_d(n,p)$ resp.) has property $\mathcal{Q}$ tends to 1 as $n \to \infty$. Further, $m(n)$ is the {\em sharp threshold} for $\mathcal{Q}$ if for each $\epsilon>0$ (fixed), w.h.p. $H_d(n, (1-\epsilon)m)$ does not have $\mathcal{Q}$ and w.h.p $H_d(n, (1+\epsilon)m)$ does have $\mathcal{Q}$. For the random hypergraph process, the {\em stopping time} of $\mathcal{Q}$, denoted $\tau(\mathcal{Q})$ is the first moment that the process has this property $\mathcal{Q}$; we denote the hypergraph process stopped at this time by $H_d(n, \tau(\mathcal{Q}))$.

In one of the first papers on random graphs, Erd\H{o}s and R\'{e}nyi~\cite{ER59} showed that $m=\tfrac{1}{2} n \ln n$ is a sharp threshold for connectivity in $G(n,m)$. Later, Stepanov~\cite{stepanov} established the sharp threshold of connectivity for $G(n,p)$ among other results. More recently, Bollob\'{a}s and Thomason~\cite{bollobas thomason} proved the stronger result for the random graph process that w.h.p. the moment the graph process loses its last isolated vertex is also the moment that the process becomes connected; in other words, w.h.p. $\tau(\text{no isolated vertices}) = \tau(\text{connected})$. We prove the analogous result for the the random $d$-uniform hypergraph process; a consequence of this result is that $m=\tfrac{n}{d} \ln n$ is a sharp threshold of connectivity for $H_d(n,m)$. 

There are various measures for the {\em strength} of connectedness of a connected graph, but here we will focus on $k$-(vertex-)connectivity.  For $k\in \mathbb{N}$, a hypergraph with more than $k$ vertices is $k$-{\em connected} if whenever $k-1$ vertices are deleted, along with their incident edges, the remaining hypergraph is connected. Note that the definition of 1-connectedness coincides with connectedness. Necessarily, for a hypergraph to be $k$-connected, each vertex must have degree at least $k$, because if a vertex $v$ has degree less than $k$, then we can delete a neighbor from each incident edge to isolate $v$. However, as commonly seen in these types of results, the main barrier to $k$-connectivity in these random graph models arises from such vertices that can be separated from the rest of the graph by the deletion of their neighbors (see for instance Erd\H{o}s-R\'{e}nyi~\cite{erdos renyi}, Ivchenko~\cite{ivchenko}, Bollob\'{a}s~\cite{bollobas4},\cite{bollobas book}). Here, we extend this idea to random $d$-uniform hypergraphs; in particular, we find that  if $m_0=\tfrac{n}{d} (\ln n +(k-1)\ln \ln n -\omega)$ and $m_1=\tfrac{n}{d}(\ln n +(k-1)\ln \ln n +\omega)$, $\omega\to\infty$, then w.h.p. $H_d(n,m_0)$ is not $k$-connected and w.h.p. $H_d(n, m_1)$ is $k$-connected; also we find an analogous threshold value for $H_d(n,p)$.

A stronger result concerns the random graph process where edges are added one after another. Let $\tau_k := \tau(\text{min degree at least }k)$ and $T_k:=\tau(k\text{-connected})$; note that $\tau_k \leq T_k$. In \cite{bollobas thomason}, Bollob\'{a}s and Thomason showed that for $d=2$ (the graph case) and any $k=k(n) \in [1, n-1]$, $P(\tau_k = T_k) \to 1$. We extend this result for $d$-uniform random hypergraphs albeit for {\em fixed} $d$ and $k$.

\begin{theorem}\label{theorem: 1.25}
W.h.p, at the moment the $d$-uniform hypergraph process loses its last vertex with degree less than $k$, this process becomes $k$-connected. Formally, for $d \geq 3$ and  $k \geq 1$ (both fixed), $P(\tau_k=T_k) \to 1$. 
\end{theorem}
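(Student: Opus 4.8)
The plan is to follow the Bollob\'{a}s--Thomason strategy, adapted to hypergraphs. Since $\tau_k \le T_k$ trivially, it suffices to show that w.h.p. the hypergraph $H_d(n,\tau_k)$ is already $k$-connected. Equivalently, I want to rule out, w.h.p., the existence of a ``bad cut'': a set $S$ of at most $k-1$ vertices whose removal disconnects $H_d(n,\tau_k)$. The first step is to pin down the location of $\tau_k$: using the first-moment/second-moment computations underlying the threshold stated in the introduction, w.h.p. $\tau_k$ corresponds to roughly $m^\star := \tfrac{n}{d}(\ln n + (k-1)\ln\ln n + O(\ln\ln\ln n))$ edges, and more usefully, one can condition on a window $[m_0, m_1]$ with $m_i = \tfrac{n}{d}(\ln n + (k-1)\ln\ln n \pm \omega)$ for some slowly growing $\omega$; w.h.p. $\tau_k$ lies in this window. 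So it is enough to prove that w.h.p. \emph{every} hypergraph in the process during this window that has minimum degree $\ge k$ is $k$-connected — or more precisely, that the specific graph $H_d(n,\tau_k)$ is. A convenient technical device (as in Bollob\'{a}s--Thomason) is to work with $H_d(n,m_1)$, which w.h.p. already has min degree $\ge k$, and separately control the small-degree vertices of $H_d(n,\tau_k)$.

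The combinatorial heart is a union bound over potential separators. Suppose $H_d(n,\tau_k)$ is not $k$-connected. Then there is a partition $V = A \cup S \cup B$ with $|S| \le k-1$, $A,B \ne \emptyset$, and no edge meeting both $A$ and $B$ (every edge lies within $A\cup S$ or within $B \cup S$). I would split into cases by the size of the smaller side, say $|A| = t$. The regime that matters, and the one that forces the threshold, is $t=1$: a single vertex $v$ all of whose incident edges also pass through the separating set $S$ of size $\le k-1$; but at time $\tau_k$ every vertex has degree $\ge k$, and a degree-$k$ vertex $v$ whose $k$ edges are ``concentrated'' on a common $(k-1)$-set of other vertices is exactly the obstruction — so I must show w.h.p. no such configuration exists at the stopping time. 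This is the delicate part: at time $\tau_k - 1$ there \emph{is} a vertex of degree $k-1$ (the last one), and I must argue that when its $k$th edge arrives, that edge does not land on a set of previously-used neighbors, and simultaneously that no \emph{other} low-degree vertex creates a separator. For $t \ge 2$ the count is easier: the number of ways to choose $A$ of size $t$, $S$ of size $\le k-1$, and then the requirement that all $\binom{n-t}{d}$-ish edges meeting $A$ avoid $B$ forces an exponentially small probability once $m \sim \tfrac{n}{d}\ln n$, since such a set $A$ would need to have all its $\Theta(t n^{d-1})$ incident edges confined to a set of size $t + k - 1 + |B|$ with $B$ small, or $A$ itself would be ``under-covered''. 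I would handle $2 \le t \le \log n$ and $t > \log n$ (equivalently $|B|$ small) by the usual symmetric pair of estimates.

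In more detail, for the main $t=1$ case I would proceed as Bollob\'{a}s--Thomason do: let $W$ be the (w.h.p.\ small, by a first-moment bound of size $n^{o(1)}$) set of vertices whose degree in $H_d(n,m_0)$ is at most $2k$, say; these are the only candidates for being the ``lonely'' side $A$ of a separator at time $\tau_k \in [m_0,m_1]$. For each $v \in W$, I track the edges incident to $v$ that arrive in the window $(m_0,\tau_k]$; I need that the $d-1$ other vertices in these edges, together with the $\le 2k$ neighbors $v$ already had, cannot be covered by a set of size $k-1$ in a way consistent with $v$'s final degree being $\ge k$. Since $W$ is small and the edges added in the window land on essentially uniform random $d$-sets, the probability that the few edges incident to a fixed $v\in W$ all reuse a fixed tiny neighbor set is $n^{-\Omega(1)}$ per vertex, and a union bound over $W$ closes the case. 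The combinatorics for a $d$-uniform edge is bookkeeping-heavier than for graphs (an edge through $v$ contributes $d-1$ potential ``new neighbors,'' and an edge can straddle $A$, $S$, $B$ in more ways), so I expect the main obstacle to be organizing these cases cleanly and getting the constants in the first-moment estimate for $|W|$ and for the ``concentration of neighbors'' events to line up with the $(k-1)\ln\ln n$ term — i.e., verifying that the threshold window is narrow enough that $\tau_k$ and $T_k$ cannot differ, which is exactly where the Bollob\'{a}s--Thomason argument is tightest.

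Finally, to deduce the probabilistic corollaries (the $e^{-e^{-c}/(k-1)!}$ limit for $H_d(n,m)$ and $H_d(n,p)$), I would combine Theorem~\ref{theorem: 1.25} with a Poisson-approximation / Stein--Chen computation for the number of vertices of degree less than $k$ at $m = \tfrac{n}{d}(\ln n + (k-1)\ln\ln n + c)$: the expected number of such vertices tends to $e^{-c}/(k-1)!$ and they are asymptotically independent, so $P(\text{min degree} \ge k) \to e^{-e^{-c}/(k-1)!}$, and by the theorem this is also $P(k\text{-connected})$; the transfer between $H_d(n,m)$ and $H_d(n,p)$ is routine.
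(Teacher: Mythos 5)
Your overall architecture is the same as the paper's (locate $\tau_k$ in a window around $\tfrac{n}{d}(\ln n+(k-1)\ln\ln n)$, union-bound over separators according to the size of the smaller side, treat the ``concentrated neighbors'' obstruction separately, and finish the corollary by Poisson approximation), but two of your steps would fail as written. First, in your $t=1$ case you guard only the set $W$ of vertices of degree at most $2k$ in $H_d(n,m_0)$ and assert these are the only candidates for the lonely side of a separator. That is not justified: in a $d$-uniform hypergraph a vertex of arbitrarily large degree can still be isolated by deleting fewer than $k$ vertices, namely when its incident edges pairwise intersect outside $v$ (e.g.\ all of them pass through one common vertex). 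Ruling this out for \emph{all} degrees is exactly the content of the paper's quasi-disjointness lemmas (Lemma~\ref{lemma: 6.5.2}, a first-moment bound on vertices whose maximum quasi-disjoint star is smaller than $\min(\deg,k)$, together with Lemma~\ref{cor: 7.28}, which shows the few edges arriving in the window that touch degree-$(k-1)$ vertices avoid their old neighborhoods); your proposal contains no argument covering high-degree vertices with overlapping edges, and this is precisely the hypergraph-specific difficulty.

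Second, your claim that $2\le t\le\log n$ is ``easier'' with ``exponentially small'' probability is false for bounded $t$: the probability that a fixed $t$-set is cut off from the rest by a fixed $(k-1)$-set at $m\sim\tfrac nd(\ln n+(k-1)\ln\ln n)$ is about $e^{-tdm/n}= n^{-t}(\ln n)^{-t(k-1)+o(t)}$, so after multiplying by $\binom{n}{t}\binom{n}{k-1}$ a factor of order $n^{k-1}$ survives and the naive union bound diverges for $k\ge 2$. To close this case you must exploit the structure of the small side at time $\tau_k$: some vertex $w$ there has at least $k$ incident edges all confined to the small side together with the $k-1$ deleted vertices, and --- again via quasi-disjointness, since only $k-1$ vertices are removed --- at least one of these edges lies wholly inside the small side, which both forces $t\ge d$ and supplies the extra $n^{-(d-1)}$-type saving of the paper's $P_s(i)$ computation. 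This saving is also what lets one handle the stopping time legitimately: since $H_d(n,\tau_k)$ is not a uniform $H_d(n,m)$, one works with events measurable in $H_d(n,m)$ (the paper's $C_m$, which also uses the increasing property of Lemma~\ref{lemma: 7.28.1} to cap the small side at $\ln n$) and takes a union over the $\Theta(n\ln\ln\ln n)$ values of $m$ in the window, so the per-$m$ bound must be $o\bigl(1/(n\ln\ln\ln n)\bigr)$; your sketch leaves this conditioning issue vague, and without the ``edge wholly inside the small side'' term the per-$m$ bound you would get is only of order $\mathrm{polylog}(n)/n$, which is not enough. Repairing both points essentially forces you onto the paper's route through quasi-disjoint edge stars.
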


To prove this result, we begin by determining the likely range of $\tau_k$, and further that just prior to this window, at some $m_0$ edges, w.h.p. there are not many vertices of degree less than $k$. Then, we prove that w.h.p. $H_d(n, m_0)$ is almost $k$-connected in the sense that whenever $k-1$ vertices are deleted, there is a massive component using almost all leftover vertices. Third, we show that w.h.p. to isolate a vertex of $H_d(n, \tau_k),$ you would have to delete at least $k$ of its neighbors (this is trivially true for graphs, but not so for $d \geq 3$). In particular, we show that w.h.p. edges incident to degree $k-1$ vertices have trivial intersection (just the vertex itself). Finally, we show that the probability that $\tau_k < T_k$, but these three previous likely events also hold tends to zero, which completes the proof of the theorem. The following corollary is nearly immediate in light of the theorem.

\begin{corollary}\label{cor: 1.25}
\textbf{(i)} Let $m=\frac{n}{d}\left(\ln n + (k-1)\ln \ln + c_n\right)$, where $c_n \to c \in \mathbb{R}$. W.h.p. $H_d(n, m)$ is $(k-1)$-connected, but not $(k+1)$-connected. Further, the probability that $H_d(n, m)$ is $k$-connected tends to $e^{-e^{-c}/(k-1)!}$. \\
\textbf{(ii)} Let $p=(d-1)! \frac{\ln n + (k-1)\ln \ln + c_n}{n^{d-1}}$, where $c_n \to c \in \mathbb{R}$. W.h.p. $H_d(n, p)$ is $(k-1)$-connected, but not $(k+1)$-connected. Further, the probability that $H_d(n, p)$ is $k$-connected tends to $e^{-e^{-c}/(k-1)!}$. 
\end{corollary}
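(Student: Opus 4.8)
The plan is to reduce both parts of the corollary to statements about the minimum degree and then let Theorem~\ref{theorem: 1.25} turn ``minimum degree at least $k$'' into ``$k$-connected''. Fix $d\ge3$ and $k\ge1$, write $\delta(H)$ for the minimum degree of $H$, and set $L=L_n:=\ln n+(k-1)\ln\ln n+c_n$, so $m=\tfrac nd L$, $p=(d-1)!\,L/n^{d-1}$, and $L\sim\ln n\to\infty$. First I would establish a Poisson limit law for the number of low-degree vertices. In $H_d(n,m)$ a fixed vertex $v$ lies in $N_v=\binom{n-1}{d-1}$ potential edges, so $P(\deg v=j)$ is a hypergeometric probability asymptotic to $e^{-L}L^{j}/j!$ (the same value one gets more transparently in $H_d(n,p)$, where $\deg v\sim\mathrm{Bin}(N_v,p)$ and $N_vp=(1+o(1))L$); in particular $P(\deg v=k-1)=(1+o(1))\,e^{-c_n}/\big(n\,(k-1)!\big)$, $P(\deg v\le k-2)=o(1/n)$, and $P(\deg v=k)=(1+o(1))\,e^{-c_n}L/\big(n\,k!\big)$. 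Hence the expected number $X$ of vertices of degree at most $k-1$ tends to $\lambda:=e^{-c}/(k-1)!$ (degrees below $k-1$ contributing negligibly), while the expected number of vertices of degree at most $k$ tends to $\infty$. A routine factorial-moment computation --- the only dependence between two vertices being a shared incident edge, a lower-order effect --- gives $E[(X)_r]\to\lambda^{r}$ for every fixed $r$, so $X$ converges in distribution to $\mathrm{Poisson}(\lambda)$ and $P\big(\delta(H_d(n,m))\ge k\big)=P(X=0)\to e^{-\lambda}$; the same limit for $H_d(n,p)$ follows by the standard passage between the two models (condition on the number of edges $M\sim\mathrm{Bin}\big(\binom nd,p\big)$, which has $E[M]=m+O(\ln n)$ and $\mathrm{sd}(M)=O(\sqrt{n\ln n})$, hence lies in $[\tfrac nd(L-\epsilon),\tfrac nd(L+\epsilon)]$ w.h.p.\ for each fixed $\epsilon>0$, and use that ``$\delta\ge k$'' is monotone to sandwich).

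Next I would invoke Theorem~\ref{theorem: 1.25}. Viewing $H_d(n,m)$ as the process after $m$ insertions, the event $\{H_d(n,m)\text{ is }k\text{-connected}\}$ is $\{T_k\le m\}$ and $\{\delta(H_d(n,m))\ge k\}$ is $\{\tau_k\le m\}$; since $\tau_k\le T_k$ always and $P(\tau_k=T_k)\to1$, these events differ in probability by $o(1)$, so $P\big(H_d(n,m)\text{ is }k\text{-connected}\big)\to e^{-\lambda}=e^{-e^{-c}/(k-1)!}$. For $H_d(n,p)$ I would again use monotonicity of $k$-connectivity together with the concentration of $M$: $P\big(H_d(n,p)\text{ is }k\text{-connected}\big)$ is sandwiched between the corresponding probabilities for $H_d\big(n,\tfrac nd(L\pm\epsilon)\big)$, which by the previous sentence tend to $\exp\!\big(-e^{-(c\pm\epsilon)}/(k-1)!\big)$; letting $\epsilon\to0$ gives the stated limit.

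It remains to settle the two remaining assertions. For $(k-1)$-connectivity (vacuous when $k=1$): the bound $P(\deg v\le k-2)=o(1/n)$ and a union bound give $\delta\ge k-1$ w.h.p., i.e.\ $\tau_{k-1}\le m$ w.h.p.; applying Theorem~\ref{theorem: 1.25} with parameter $k-1$ gives $\tau_{k-1}=T_{k-1}$ w.h.p., hence $T_{k-1}\le m$, so $H_d(n,m)$ --- and, by the same transfer, $H_d(n,p)$ --- is $(k-1)$-connected w.h.p. For the failure of $(k+1)$-connectivity: the expected number of vertices of degree exactly $k$ tends to $\infty$, and a standard second-moment estimate (covariances again being lower order) shows that w.h.p.\ some vertex $v$ has $\deg v\le k$; deleting one other endpoint from each of its at most $k$ incident edges isolates $v$ using at most $k$ deletions, so the hypergraph is not $(k+1)$-connected. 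The main obstacle is the first step: proving the Poisson convergence with the exact intensity $\lambda=e^{-c}/(k-1)!$ and carrying that constant faithfully through the $H_d(n,p)\leftrightarrow H_d(n,m)$ passage, since the target limit lies strictly between $0$ and $1$ and so tolerates only $(1+o(1))$-precise bookkeeping of degrees and edge-counts; once this is done, the rest is a short deduction from Theorem~\ref{theorem: 1.25}.
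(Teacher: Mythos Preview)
Your proposal is correct and follows essentially the same route as the paper: prove Poisson convergence (via factorial moments) of the number of degree-$(k-1)$ vertices with parameter $\lambda=e^{-c}/(k-1)!$, invoke Theorem~\ref{theorem: 1.25} to turn ``$\delta\ge k$'' into ``$k$-connected'', and transfer to $H_d(n,p)$ by concentration of the edge count plus monotonicity. The only cosmetic differences are that the paper chooses $c_n^{\pm}=c_n\pm \ln n/\sqrt{n}$ (which already tends to $c$, avoiding your $\epsilon\to0$ double limit) and deduces ``not $(k+1)$-connected'' from $\tau_{k+1}>m$ via Lemma~\ref{lemma: 6.5.1} together with the trivial $\tau_{k+1}\le T_{k+1}$, rather than re-running a second-moment argument.
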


For the remainder of this paper, let $d \geq 3$ and $k \geq 1$ be fixed numbers.

\section{Likely range of $\tau_k$}

\begin{lemma}\label{lemma: 6.5.1}
Let $\omega=\omega(n) \to \infty$, but $\omega = o(\ln \ln n)$,  $m_0= \frac{n}{d} \left( \ln n + (k-1) \ln \ln n - \omega\right)$ and $m_1 = \frac{n}{d} \left( \ln n + (k-1) \ln \ln n + \omega \right)$. Then w.h.p.,

\textbf{(i)} the minimum degree of $H_d(n, m_0)$ is $k-1$ and the number of vertices with degree $k-1$ is in the interval
\begin{equation}\label{eqn: 6.4.3}
\left[ \frac{1}{2} \frac{e^\omega}{(k-1)!}, \, \frac{3}{2} \frac{e^\omega}{(k-1)!} \right].
\end{equation}

\textbf{(ii)} there are no vertices of degree $k-1$ in $H_d(n, m_1)$. 

Consequently, w.h.p. $\tau_k \in [m_0, m_1]$. 
\end{lemma}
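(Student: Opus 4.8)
The plan is to reduce everything to first- and second-moment estimates on the number of low-degree vertices, carried out directly in $H_d(n,m)$. For a fixed vertex $v$, its degree in $H_d(n,m)$ is the number of the $m$ randomly chosen $d$-sets containing $v$, which is hypergeometric with population $\binom{n}{d}$, successes $\binom{n-1}{d-1}$, and $m$ draws. Since $m_0,m_1 = \Theta(n\ln n) = o\bigl(\binom{n}{d}\bigr)$, for each fixed $j$ this distribution is asymptotically Poisson with mean $\lambda := \binom{n-1}{d-1}m/\binom{n}{d} = \tfrac{d}{n}m$; this mean equals $\lambda_0 := \ln n + (k-1)\ln\ln n - \omega$ when $m = m_0$ and $\lambda_1 := \ln n + (k-1)\ln\ln n + \omega$ when $m = m_1$. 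I would make the Poisson approximation quantitative by passing first to a binomial — the ratio of consecutive hypergeometric probabilities agrees with the binomial one up to a factor $1+o(1)$, and we only need the terms with $j = O(1)$ — and then to the Poisson, obtaining $P(\deg v = j) = (1+o(1))e^{-\lambda}\lambda^j/j!$ uniformly in $j \le k-1$. Using $\lambda^{j} = (1+o(1))(\ln n)^{j}$, $e^{-\lambda_0} = e^{\omega}/\bigl(n(\ln n)^{k-1}\bigr)$ and $e^{-\lambda_1} = e^{-\omega}/\bigl(n(\ln n)^{k-1}\bigr)$, the number $X_j$ of vertices of degree exactly $j$ satisfies $\mathbb{E} X_j = (1+o(1))\,e^{\omega}(\ln n)^{j-k+1}/j!$ in $H_d(n,m_0)$ and $\mathbb{E} X_j = (1+o(1))\,e^{-\omega}(\ln n)^{j-k+1}/j!$ in $H_d(n,m_1)$.

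From here, part (ii) is a first-moment argument: at $m_1$, $\mathbb{E} X_{k-1} = (1+o(1))e^{-\omega}/(k-1)! \to 0$ and $\mathbb{E} X_j \to 0$ for $j < k-1$ as well, so Markov's inequality gives that w.h.p.\ $H_d(n,m_1)$ has no vertex of degree at most $k-1$. At $m_0$ the same bound shows $\mathbb{E} X_j = (1+o(1))e^{\omega}(\ln n)^{j-k+1}/j! \to 0$ for every $j \le k-2$ (here the hypothesis $\omega = o(\ln\ln n)$ enters, since it forces $e^{\omega} = (\ln n)^{o(1)} = o(\ln n)$), so w.h.p.\ the minimum degree of $H_d(n,m_0)$ is at least $k-1$. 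It remains to control $X_{k-1}$ in $H_d(n,m_0)$, where $\mathbb{E} X_{k-1} = (1+o(1))e^{\omega}/(k-1)! \to \infty$, and for this I would run a second-moment computation: writing $X_{k-1} = \sum_v \mathbf{1}[\deg v = k-1]$, the diagonal terms sum to $\mathbb{E} X_{k-1} = o\bigl((\mathbb{E} X_{k-1})^2\bigr)$, while for $u \ne v$ the joint degree is again hypergeometric and the only source of dependence is the $\binom{n-2}{d-2}$ edges meeting both $u$ and $v$ — a $\tfrac{d-1}{n}$-fraction of the edges through either vertex — so $P(\deg u = k-1,\ \deg v = k-1) = (1+o(1))P(\deg u = k-1)P(\deg v = k-1)$. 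Hence $\mathrm{Var}(X_{k-1}) = o\bigl((\mathbb{E} X_{k-1})^2\bigr)$, and Chebyshev's inequality yields $X_{k-1} = (1+o(1))\mathbb{E} X_{k-1}$ w.h.p., which lies in the interval \eqref{eqn: 6.4.3} once $n$ is large (as $1/2 < 1+o(1) < 3/2$); in particular $X_{k-1} \ge 1$, so the minimum degree of $H_d(n,m_0)$ is exactly $k-1$, completing part (i).

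The final assertion follows by monotonicity of degrees along the hypergraph process: on the w.h.p.\ event from (i), $H_d(n,m_0)$ has minimum degree $k-1 < k$, so $\tau_k > m_0$; on the w.h.p.\ event from (ii), $H_d(n,m_1)$ has minimum degree at least $k$, so $\tau_k \le m_1$; intersecting the two events (still w.h.p.) gives $\tau_k \in [m_0,m_1]$.

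I expect the second-moment step to be the main obstacle: certifying the near-independence $P(\deg u = k-1,\ \deg v = k-1) = (1+o(1))P(\deg u = k-1)P(\deg v = k-1)$ with a genuinely $o(1)$, uniform error requires a careful handle on the bivariate hypergeometric, whereas the remaining estimates are routine Poisson-approximation bookkeeping. A cleaner alternative is to prove all the moment bounds in $H_d(n,p)$, where the vertex degrees are independent binomials, choosing $p$ so that $\binom{n}{d}p$ equals $m_0$ (resp.\ $m_1$), and then transfer to $H_d(n,m)$ via the standard sandwiching of $H_d(n,m)$ between $H_d(n,p^-)$ and $H_d(n,p^+)$ for $p^{\pm}$ close to $m/\binom{n}{d}$.
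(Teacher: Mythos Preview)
Your proposal is correct and follows essentially the same approach as the paper: both work directly in $H_d(n,m)$, estimate $E[X_{k-1}]$ and the second (factorial) moment of $X_{k-1}$ at $m_0$ (the paper via explicit binomial-coefficient identities, you via the equivalent hypergeometric-to-Poisson approximation), apply Chebyshev for concentration, and dispatch the remaining claims (no vertices of degree $\le k-2$ at $m_0$, none of degree $\le k-1$ at $m_1$) by first-moment bounds. The paper carries out the second-moment step exactly as you anticipate, splitting $P(\deg u=\deg v=k-1)$ according to the number $i$ of edges containing both $u$ and $v$ and showing the $i\ge 1$ terms are $O(n^{-3})$.
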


\begin{proof}
We prove that the number of vertices with degree $k-1$, denoted by $X$, is in the interval \eqref{eqn: 6.4.3} by Chebyshev's Inequality. Note that a given vertex can be in ${n-1 \choose d-1}$ possible edges, so
\begin{equation*}
E[X] = n P[\text{deg}(1) = k-1] = n \frac{ {{n-1 \choose d-1} \choose k-1} { {n \choose d}- {n-1 \choose d-1} \choose m_0 - k +1 } }{{{n \choose d} \choose m_0}}.
\end{equation*}
Here and elsewhere in this paper, we use the identity ${N \choose m-\ell} = {N \choose m} \tfrac{(m)_\ell}{(N-m+\ell)_\ell}$, where $(j)_\ell=j(j-1)\cdots (j-\ell+1),$ and later, we use the inequality ${N \choose m-\ell} \leq {N \choose m}\left( \tfrac{m}{N-m}\right)^\ell$. Now
\begin{equation*}
E[X] = (1+O(1/n))  \frac{n\cdot  n^{(d-1)(k-1)}}{(k-1)! ( (d-1)!)^{k-1}} \left( \frac{d! \, m_0}{n^d} \right)^{k-1} \frac{ {{n \choose d} - {n-1 \choose d-1} \choose m_0}}{{{n \choose d} \choose m_0}}.
\end{equation*}
This latter fraction can be sharply approximated. 
\begin{align}\label{eqn: 7.28.2}
\frac{ {{n \choose d} - {n-1 \choose d-1} \choose m_0}}{{{n \choose d} \choose m_0}} &= \prod_{i=0}^{m_0-1} \left( 1 - \frac{{n-1 \choose d-1}}{{n \choose d}-i} \right)= \prod_{i=0}^{m_0-1} \left( 1 - \frac{d}{n} +O\left( \frac{i}{n^{d+1}} \right)\right)\nonumber \\ & = \exp \left( \sum_{i=0}^{m_0-1} \left[ \frac{-d}{n}+O\left(\frac{1}{n^2}\right)+O\left(\frac{i}{n^{d+1}}\right) \right] \right)   \nonumber \\ &=\left( 1 +O\left(\frac{\ln n}{n} \right) \right) \exp \left( - \frac{d m_0}{n} \right) = \left( 1 + O\left(\frac{\ln n}{n} \right)\right) \frac{e^\omega}{n (\ln n)^{k-1}}.
\end{align}
Hence
\begin{align*}
E[X] &= \left( 1 + O\left( \frac{\ln n}{n} \right)\right) \frac{e^\omega}{(k-1)!} \left( \frac{ d m_0/n }{\ln n} \right)^{k-1} \\ &= \left( 1 + O\left( \frac{\ln \ln n}{\ln n} \right)\right) \frac{e^\omega}{(k-1)!}.
\end{align*}

For the second factorial moment, we have that
\begin{equation*}
E[X(X-1)] = n (n-1) P(\text{deg}(1)=\text{deg}(2) = k-1).
\end{equation*}
We break this latter probability over $i$, the number of edges that include both vertices $1$ and $2$. In particular, vertex 1 is in $k-1-i$ edges that do not contain vertex $2$ and vice versa; further there are $m_0 - 2(k-1)+i$ edges that include neither vertex 1 or 2. Since there are ${n -2 \choose d-2}$ potential hyperedges containing both vertices and ${n-1\choose d-1}-{n-2 \choose d-2}$ potential hyperedges containing one vertex but not the other, we have that 
\begin{equation}\label{eqn: 7.28.3}
P(\text{deg}(1)=\text{deg}(2)=k-1) = \sum_{i=0}^{k-1}{{n-2 \choose d-2} \choose i} {{n-1 \choose d-1} - {n-2 \choose d-2} \choose k-1-i}^2 \frac{ {{n \choose d}-2{n-1 \choose d-1}+{n-2 \choose d-2} \choose m_0 - 2(k-1)+i }}{{{n \choose d} \choose m_0}}.
\end{equation}
Just as in \eqref{eqn: 7.28.2}, we can estimate this latter fraction
\begin{align*}
\frac{ {{n \choose d}-2{n-1 \choose d-1}+{n-2 \choose d-2} \choose m_0 - 2(k-1)+i }}{{{n \choose d} \choose m_0}} &= \frac{(m_0)_{2(k-1)-i}}{ {\scriptstyle \left( {n \choose d}-2{n-1 \choose d-1}+{n-2 \choose d-2} - m_0 + 2(k-1)-i\right)_{2(k-1)-i} } } \times \frac{{{n \choose d}-2{n-1 \choose d-1}+{n-2 \choose d-2} \choose m_0}}{{{n \choose d} \choose m_0}} \\ &= \left( 1 + O \left( \frac{\ln \ln n}{\ln n } \right) \right) \left( \frac{m_0}{{n \choose d}} \right)^{2(k-1)-i} \frac{e^{2 \omega}}{n^2 \, (\ln n)^{2(k-1)}}.
\end{align*}
Using these asymptotics, one can show that the $i$'th term in \eqref{eqn: 7.28.3} is on the order of $\frac{e^{2\omega}}{n^{2+i} (\ln n)^i}$. In particular, the sum of the terms over $i \in [1, k-1]$, is $O(n^{-3})$.  Therefore
\begin{align*}
P(\text{deg(1)= deg(2)= }k-1) &= {{n-1 \choose d-1} - {n-2 \choose d-2} \choose k-1}^2 \frac{ {{n \choose d}-2{n-1 \choose d-1}+{n-2 \choose d-2} \choose m_0 - 2(k-1) }}{{{n \choose d} \choose m_0}} + O(n^{-3}) \\ &= \left( 1 + O\left( \frac{\ln \ln n}{\ln n} \right)\right) \frac{e^{2 \omega}}{((k-1)!)^2 n^2};  
\end{align*}
whence
\begin{equation*}
E[X(X-1)] = \left( 1 + O\left( \frac{\ln \ln n}{\ln n}\right)\right) E[X]^2.
\end{equation*}
Consequently,
\begin{equation*}
\text{var}[X] = E[X] + O\left( (E[X])^2 \frac{\ln \ln n}{\ln n} \right).
\end{equation*}
By Chebyshev's Inequality, $X$ is concentrated around its mean and in particular, w.h.p. $X$ is in the interval \eqref{eqn: 6.4.3}. To finish the proof of part \textbf{(i)}, it remains to show that w.h.p. there are no vertices of degree less than $k-1$, which can be done by a first moment argument using similar techniques to the asymptotics of $E[X]$. Similarly, for part \textbf{(ii)}, one can easily show that the expected number of vertices of degree $k-1$ in $H_d(n, m_1)$ tends to zero as well. 
\end{proof}

\section{$H_d(n, m_0)$ is almost $k$-connected}

Now we will establish that w.h.p. $H_d(n, m_0)$ is almost $k$-connected in the sense that if $k-1$ vertices are deleted, then there remains a massive component containing almost all left-over vertices. To this end, we prove an analogous statement for the random Bernoulli hypergraph $H_d(n, p)$ and use a standard conversion lemma to obtain the desired result for $H_d(n, m_0)$. In this next lemma, we pick a specific version of $m_0$, one where $\omega=\ln \ln \ln n$. 

\begin{lemma}\label{lemma: 7.28.1}
Let $m'_0 = \frac{n}{d} \left( \ln n  + (k-1)\ln \ln n- \ln \ln \ln n \right)$ and $p=m'_0/{n \choose d}$. With high probability, 

\textbf{(i)} $H_d(n, p)$ has the property ``whichever $k-1$ vertices are deleted, there remains a giant component which includes all but up to $\ln n$ leftover vertices."

\textbf{(ii)} $H_d(n, m'_0)$ has the property ``whichever $k-1$ vertices are deleted, there remains a giant component which includes all but up to $\ln n$ leftover vertices."
\end{lemma}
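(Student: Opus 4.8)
The plan is to prove part \textbf{(i)} for the binomial model $H_d(n,p)$ by a union bound over potential ``separating'' pairs of vertex sets, and then deduce part \textbf{(ii)} from it by the standard conversion between $H_d(n,p)$ and $H_d(n,m'_0)$. The conversion works because the property $\mathcal Q=$``deleting any $k-1$ vertices leaves a component containing all but $\le\ln n$ of the remaining vertices'' is monotone increasing: after the deletion, adding an edge either does nothing (if the edge meets the deleted set) or merges components, so it can only enlarge the largest component. With $p=m'_0/\binom nd$, the edge count of $H_d(n,p)$ is $\mathrm{Bin}(\binom nd,p)$ with mean $m'_0$, so conditioning on the number of edges and using monotonicity of $j\mapsto P(H_d(n,j)\notin\mathcal Q)$ gives $P(H_d(n,p)\notin\mathcal Q)\ge P(\mathrm{Bin}(\binom nd,p)\le m'_0)\,P(H_d(n,m'_0)\notin\mathcal Q)\ge\tfrac13 P(H_d(n,m'_0)\notin\mathcal Q)$ for large $n$; hence \textbf{(i)}$\Rightarrow$\textbf{(ii)} (one may instead cite the usual monotone-property transfer lemma).

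For \textbf{(i)}: if $H_d(n,p)\notin\mathcal Q$ then there is a set $S$ of $k-1$ vertices and a set $T\subseteq[n]\setminus S$ that is a union of connected components of $H_d(n,p)-S$ with $\ln n<|T|\le(n-k+1)/2$ — if the largest component of $H_d(n,p)-S$ already misses more than $\ln n$ leftover vertices and has more than half of them, take $T$ to be everything outside it; otherwise group components greedily, a routine case analysis. Writing $U:=[n]\setminus S\setminus T$, the event that $T$ is a union of components of $H_d(n,p)-S$ forces every \emph{present} edge crossing between $T$ and $U$ to meet $S$; but edges meeting $S$ are deleted, so in fact no present edge lies in $[n]\setminus S$ and crosses between $T$ and $U$. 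The number of such potential edges is $N_t:=\binom{n-k+1}{d}-\binom td-\binom{n-k+1-t}{d}$ with $t:=|T|$, each absent independently with probability $1-p$, so
\[
P\big(H_d(n,p)\notin\mathcal Q\big)\;\le\;\binom{n}{k-1}\sum_{t=\lfloor\ln n\rfloor+1}^{\lfloor(n-k+1)/2\rfloor}\binom{n}{t}\,e^{-pN_t}.
\]

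The heart of the argument is bounding this sum, which I would split into two ranges. In the near-linear range $\ln n<t\le n/(\ln n)^2$, telescoping $\binom{n-k+1}{d}-\binom{n-k+1-t}{d}=\sum_{j<t}\binom{n-k-j}{d-1}$ and discarding the lower-order $\binom td$ gives $N_t\ge\big(1-O((\ln n)^{-2})\big)\,t\binom{n-1}{d-1}$; since $p\binom{n-1}{d-1}=m'_0 d/n=\ln n+(k-1)\ln\ln n-\ln\ln\ln n$, we get $pN_t\ge t\big(\ln n+(k-1)\ln\ln n-\ln\ln\ln n\big)-O(t/\ln n)$. Combining with $\binom nt\le(en/t)^t$ and $\ln(en/t)\le\ln n-\ln\ln n+O(1)$ on this range (valid because $t\ge\ln n$), the $t$-th summand is at most $\exp\big(-(k-o(1))\,t\ln\ln n\big)$, a geometric series with base $\to0$; its sum is $\le\exp\big(-(k-o(1))(\ln n)(\ln\ln n)\big)$, which is crushed by $\binom n{k-1}\le n^{k-1}$. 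In the complementary bulk range $n/(\ln n)^2<t\le(n-k+1)/2$, the crude bound $N_t\ge t\binom{n-k+1-t}{d-1}\ge c_d\,t\,n^{d-1}$ (some constant $c_d>0$, using $n-k+1-t\ge(n-k+1)/2$) gives $e^{-pN_t}\le\exp(-c'_d\,t\ln n)$, while $\binom nt\le(en/t)^t\le\exp(O(t\ln\ln n))$; as $\ln n\gg\ln\ln n$ each summand is $\le\exp(-\Omega(t\ln n))$, and summing over $t>n/(\ln n)^2$ and multiplying by $n^{k-1}$ again yields $o(1)$. Hence $P(H_d(n,p)\notin\mathcal Q)\to0$.

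The main obstacle is the small-$t$ end of the union bound: there the exponential saving from the missing crossing edges only barely beats the entropy term $\binom nt$, and for $k=1$ it is genuinely essential to use $\binom nt\le(en/t)^t$ rather than $\binom nt\le n^t$, exploiting that the correction $-\ln\ln\ln n$ in $m'_0$ is of smaller order than the $\ln\ln n$ gained from $\ln(en/t)\le\ln n-\ln\ln n+O(1)$. One must also verify that deleting the $k-1$ vertices of $S$ lowers the crossing-edge count $N_t$ only by a lower-order amount — immediate since $d,k$ are fixed — and carry out the elementary case analysis producing the separating set $T$ in the stated size window.
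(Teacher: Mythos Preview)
Your proposal is correct and follows essentially the same route as the paper: a union bound over the deleted $(k-1)$-set and the separating set $T$, the crossing-edge count $N_t$, a split of the sum into a small-$t$ and a bulk range, the inequality $\binom nt\le(en/t)^t$ in the small range, and a cruder bound in the bulk. The only noteworthy differences are cosmetic: the paper first fixes $\mathbf v^*=\{n-k+2,\dots,n\}$ so that the deleted hypergraph is exactly $H_d(n',p)$, and in the bulk range it uses the monotonicity of $t^d+(n'-t)^d$ together with the trivial $\sum_s\binom{n'}{s}\le 2^{n'}$ instead of your lower bound $N_t\ge t\binom{n-k+1-t}{d-1}$; and for part \textbf{(ii)} the paper uses the pointwise transfer $P(H_d(n,m'_0)\in\mathcal A)\le P(H_d(n,p)\in\mathcal A)/P(e(H_d(n,p))=m'_0)$ with $P(e=m'_0)=\Theta((n\ln n)^{-1/2})$, whereas you exploit that $\mathcal Q$ is monotone increasing and condition on $\{e(H_d(n,p))\le m'_0\}$ --- your route is slightly slicker here since it only needs $P(H_d(n,p)\notin\mathcal Q)=o(1)$ rather than superpolynomial decay.
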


\begin{proof}
\textbf{(i)} Given a set of $k-1$ vertices, $\mathbf{v} = \{v_1, \ldots, v_{k-1} \}$, let $\mathcal{F}(\mathbf{v})$ be the event that if the vertices $\mathbf{v}$ are deleted from $H_d(n, p)$ along with their incident edges, then there remains no components of size at least $n-(k-1)-\ln n$. In particular, we wish to show that w.h.p. $H_d(n, p)$ is not in $\mathcal{F}(\mathbf{v})$ for any $\mathbf{v}$. Using the union bound over all $k-1$ element sets of $[n]$ as well as symmetry, we find that
\begin{equation}\label{eqn: 6.3.5}
P\left( \mathop{\cup}_{\mathbf{v}} \mathcal{F}(\mathbf{v}) \right) \leq {n \choose k-1} P(\mathcal{F}(\mathbf{v}^*)),
\end{equation}
where $\mathbf{v}^*=\{n-(k-1)+1, \ldots, n-1, n\}$. Note that the remaining hypergraph left after deleting $\mathbf{v}^*$ from $H_d(n, p)$ is distributed as $H_d(n', p), \, n':=n-(k-1)$ (this is the primary reason that we consider the Bernoulli hypergraph $H_d(n, p)$ rather than $H_d(n, m)$). Therefore $P(\mathcal{F}(\mathbf{v}^*))$ is precisely the probability that $H_d(n', p)$ does not have a component of size at least $n'- \ln n$. 

To bound $P(\mathcal{F}(\mathbf{v}^*))$, we note that any hypergraph on $n'$ vertices without a component of size at least $n'-\ln n$ has a set of vertices $S$ such that there are no edges between $S$ and $[n'] \setminus S$ where $|S| \in [\ln n, n'-\ln n].$ To see this fact, consider a hypergraph $H$ on $n'$ vertices without such a large component and let $L_1, \ldots, L_{\ell}$ be the vertex sets of the components of $H$ in increasing order by their cardinalities. Then there is some minimal $j$ so that 
\begin{equation*}
\ln n \leq \big | \cup_{i=1}^j L_i \big | < \ln n + n'- \ln n = n'.
\end{equation*}
Further, $L_{j+1}$ is not empty and since $|L_j| \leq |L_{j+1}|$, we have that $|L_j| \leq n'/2$ and 
\begin{equation*}
\ln n \leq \big | \cup_{i=1}^j L_i \big | < \ln n + n'/2 < n' - \ln n.
\end{equation*}
Clearly, there are no edges including a vertex of $S:=\cup_{i=1}^j  L_i$ and $[n'] \setminus S$. 

Therefore, 
\begin{equation*}
P(\mathcal{F}(\mathbf{v}^*)) \leq \sum_{s=\ln n}^{n'-\ln n} P(\exists S \subset [n'], |S|=s, \text{ no edge between }S\text{ and }[n']\setminus S),
\end{equation*}
and by symmetry over all such vertex sets $S$,
\begin{equation*}
P(\mathcal{F}(\mathbf{v}^*))  \leq \sum_{s=\ln n}^{n'-\ln n} {n' \choose s} P(\text{no edge between }[s]\text{ and }[n'] \setminus [s]).
\end{equation*}
Further, this latter probability is symmetric about $s=n'/2$ (i.e. the probabilities corresponding to $s$ and $n'-s$ are equal). Hence
\begin{equation*}
P(\mathcal{F}(\mathbf{v}^*)) \leq 2 \sum_{s=\ln n}^{\lfloor n'/2 \rfloor} {n' \choose s}P(\text{no edge between }[s]\text{ and }[n'] \setminus [s]).
\end{equation*}

The number of potential edges that contain at least one vertex from $[s]$ and at least one vertex from $[n']\setminus[s]$ is ${n' \choose d}-{s \choose d}-{n'-s \choose d}$. Hence
\begin{align*}
P(\mathcal{F}(\mathbf{v}^*)) \leq 2\sum_{s=\ln n}^{\lfloor n'/2 \rfloor} {n' \choose s} (1-p)^{{n' \choose d}-{s \choose d}-{n'-s \choose d}} &\leq 2\sum_{s=\ln n}^{\lfloor n'/2 \rfloor} {n' \choose s} e^{-p\left({n' \choose d}-{s \choose d}-{n'-s \choose d}\right)} \\& =: 2 E_1 + 2 E_2,
\end{align*}
where $E_1$ and $E_2$ are the sums over $S_1 := [\ln n, n/(\ln n)]$ and $S_2 := (n/(\ln n), \lfloor n'/2 \rfloor]$ respectively. We begin with analyzing $E_2$ since these bounds will be cruder and simpler.  

Trivially, 
\begin{equation}\label{eqn: 6.4.1}
E_2  \leq  \sum_{s \in S_2} {n' \choose s} e^{ - p {n' \choose d} + p \max\limits_{t \in S_2} \left( {t \choose d}+{n'-t \choose d} \right)} \leq 2^{n'} e^{ -p{n' \choose d} + p \max \limits_{t \in S_2}  \left( {t \choose d}+{n'-t \choose d} \right)}.
\end{equation}
Now let's take on these binomial coefficient terms. Trivially ${\nu \choose d}\leq \frac{\nu^d}{d!},$ and so
\begin{equation*}
{t \choose d}+{n'-t \choose d} \leq \frac{t^d+(n'-t)^d}{d!}.
\end{equation*}
Further, the function $f(t)=t^d+(n'-t)^d$ is decreasing for $t \in S_2$, so we have that
\begin{equation*}
 \max_{t \in S_2} \left( {t \choose d}+{n'-t \choose d} \right) \leq \frac{1}{d!} \left( \left(\frac{n}{\ln n}\right)^d + \left(n'-\frac{n}{\ln n}\right)^d \right).
\end{equation*}
Now our bound in \eqref{eqn: 6.4.1} becomes
\begin{equation*}
E_2 \leq 2^{n'} \exp \left( -p {n' \choose d} + \frac{p}{d!} \left(\frac{n}{\ln n}\right)^d + \frac{p}{d!}\left(n'-\frac{n}{\ln n}\right)^d  \right).
\end{equation*}
In the previous expression, the leading order terms in the first and third terms will cancel and the middle term is absorbed in the error. Namely, we have that
\begin{equation*}
E_2 \leq 2^{n'} \exp \left( -\frac{p(n')^d}{d!} + O(\ln n)   + O\left( \frac{n \, \ln n}{(\ln n)^d} \right) +  \frac{p(n')^d}{d!}  - \frac{p \, d}{d!}  \frac{n^d}{\ln n} + O\left(\frac{n}{\ln n}\right)      \right),
\end{equation*}
or equivalently
\begin{align}\label{eqn: 6.3.4}
E_2 &\leq 2^{n'} \exp \left( - \frac{p \, n^d}{(d-1)! \, \ln n} + O\left(\frac{n}{\ln n}\right)\right) \nonumber  \\ & = \exp \left( (\ln 2 - 1)n + o(n) \right) \leq e^{-n/4}.
\end{align}

Now let's take on the sum $E_1$. We begin with taking the leading order terms in the exponent. 
\begin{equation}\label{eqn: 6.3.1}
E_1 \leq \sum_{s \in S_1} {n' \choose s} \exp \left( -p {n' \choose d} \left( 1 - \frac{{n' -s \choose d}}{{n' \choose d}} - O\left(\frac{s^d}{n^d}\right) \right)\right).
\end{equation}
Uniformly over $s \in S_1$, we have that 
\begin{equation*}
\frac{{n' - s \choose d}}{{n' \choose d}} = \left( 1 - \frac{s}{n'} +O\left(\frac{s}{n^2}\right) \right)^d = 1 - \frac{d s}{n'} + O\left( \frac{s^2}{n^2}\right).
\end{equation*}
Consequently, the exponent in \eqref{eqn: 6.3.1} is
\begin{equation*}
p{n' \choose d} \left( \frac{ds}{n}+O\left(\frac{s^2}{n^2}\right)\right) = \left( \ln n + (k-1)\ln\ln n - \ln \ln \ln n\right)\left( s+O(s^2/n)\right)+O(1);
\end{equation*}
whence there is some (fixed) $\gamma>0$ such that for all $s \in S_1$,
\begin{equation}\label{eqn: 6.3.2}
p{n' \choose d}\left(\frac{ds}{n} + O\left( \frac{s^2}{n^2}\right)\right) \geq (\ln n - \ln \ln \ln n)(s-\gamma s^2/n) - \gamma.
\end{equation}
Using the bound ${n' \choose s} \leq (e n/s)^s$ as well as \eqref{eqn: 6.3.2}, \eqref{eqn: 6.3.1} becomes
\begin{align*}
E_1 &\leq \sum_{s \in S_1} \left( \frac{en}{s} \right)^s \exp \left( -(\ln n - \ln \ln \ln n)(s-\gamma s^2/n)+\gamma \right) \\ & = e^{\gamma} \sum_{s \in S_1} \exp \left( s \left( 1 - \ln s + \ln \ln \ln n + \gamma s (\ln n - \ln \ln \ln n)/n \right) \right).
\end{align*}
However, for $s \in S_1$, we have that 
\begin{equation*}
s \leq \frac{n}{\ln n} \leq \frac{2 n}{\ln n - \ln \ln \ln n} \implies s (\ln n - \ln \ln \ln n)/n \leq 2.
\end{equation*}
Therefore
\begin{equation*}
E_1 \leq e^{\gamma} \sum_{s \in S_1} \exp \left( s \left( 1 - \ln s + \ln \ln \ln n + 2\gamma\right)\right).
\end{equation*}
This sum is dominated by the first term ($s=\ln n$) because ratios of consecutive terms uniformly tend to zero. Consequently, we have that 
\begin{equation}\label{eqn: 6.3.3}
E_1 = O\left[\exp \left( - \ln n \ln \ln n + o(\ln n \ln \ln n) \right)\right].
\end{equation}

Summing our bounds for $E_1$ and $E_2$ (\eqref{eqn: 6.3.3} and \eqref{eqn: 6.3.4}, respectively), we have that
\begin{equation}\label{eqn: 6.4.2}
P(\mathcal{F}(\mathbf{v}^*)) \leq \exp \left( -\ln n \ln \ln n + o(\ln n \ln \ln n) \right),
\end{equation}
which most definitely is $o(n^{-(k-1)})$ and so by the bound \eqref{eqn: 6.3.5}, part \textbf{(i)} of the lemma is proved. 

Part \textbf{(ii)} is established by using a standard conversion technique between $H_d(n, m)$ and $H_d(n, p)$. For any hypergraph property $\mathcal{A}$, we have that
\begin{equation}\label{eqn: 3.4.1}
P(H_d(n, p) \in \mathcal{A}) = \sum_{m=0}^{{n \choose d}} P(H_d(n, m) \in \mathcal{A}) P(e(H_d(n, p))=m),
\end{equation}
where $e(H)$ is the number of edges of $H$. Therefore, for any (possible) $m$,
\begin{equation*}
P(H_d(n, p) \in \mathcal{A}) \geq P(H_d(n, m) \in \mathcal{A}) P(e(H_d(n, p))=m), 
\end{equation*}
whence
\begin{equation*}
P(H_d(n, m) \in \mathcal{A}) \leq \frac{P(H_d(n, p) \in \mathcal{A})}{P(e(H_d(n, p)=m))}.
\end{equation*}
For $m= \Theta(n \ln n)$ and $p=m/{n \choose d}$, one can show that
\begin{equation*}
P(e(H_d(n, p))=m) = {{n \choose d}\choose m} p^m (1-p)^{{n \choose d}-m}= \Theta(m^{-1/2}).
\end{equation*}
Hence in our case,
\begin{equation*}
P(H_d(n, m'_0) \in \mathop{\cup}_{\mathbf{v}} \mathcal{F}(\mathbf{v}) ) = O\left( \sqrt{n \ln n} \, P(H_d(n, p) \in \mathop{\cup}_{\mathbf{v}} \mathcal{F}(\mathbf{v})) \right).
\end{equation*}
In the proof of part \textbf{(i)}, we found that this latter probability tends to zero superpolynomially fast. 
\end{proof}

\section{Quasi-disjoint Edges}

For the random graph process ($d=2$), it was found that the main barrier to $k$-connectivity is the presence of vertices of degree less than $k$, which could be isolated with the deletion of their neighbors (see Erd\H{o}s-R\'{e}nyi~\cite{erdos renyi}, Ivchenko~\cite{ivchenko}, Bollob\'{a}s~\cite{bollobas4},\cite{bollobas book}). We will find a similar situation for the random hypergraph process. 

However, we run into an additional issue here for hypergraphs. Even if the degree of a vertex $v$ is $k$, we could isolate $v$ with the deletion of less than $k$ vertices. For instance, if all of $v$'s edges also include vertex $w$, then the deletion of just $w$ from the hypergraph (along with its incident edges) will isolate $v$ from the rest of the hypergraph. Our ultimate goal in this section is to show that w.h.p. each vertex of $H_d(n, \tau_k)$ has at least $k$ edges whose pairwise intersections are precisely $\{v\}$; in this case, for any vertex, you would need to delete at least $k$ of its neighbors to isolate it. To this end, we first prove that w.h.p. $H_d(n, m_0)$ has this property for vertices with degree at least $k$ and as nearly as could be expected for vertices with degree $k-1$. 

A set of edges $E$ incident to vertex $v$ is {\it quasi-disjoint} if all pairwise intersections of these edges are $\{v\};$ formally, if $e, f \in E, \, e \neq f$, then $e \cap f = \{v\}.$

\begin{lemma}\label{lemma: 6.5.2}
Let $m_0 = \tfrac{n}{d} ( \ln n + (k-1) \ln \ln n - \omega)$, where $\omega \to \infty,$ but $\omega=o(\ln \ln n)$. W.h.p., $H_d(n, m_0)$ is such that 

\textbf{(i)} the incident edges of a degree $k-1$ vertex form a quasi-disjoint set, 

\textbf{(ii)} vertices with degree at least $k$ have a quasi-disjoint set of incident edges with size at least $k$.
\end{lemma}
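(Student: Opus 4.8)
The plan is to bound, via a first-moment (union-bound) argument, the probability of the two "bad" events: (i) some vertex $v$ of degree exactly $k-1$ has two incident edges meeting outside $\{v\}$, and (ii) some vertex $v$ of degree at least $k$ fails to contain a quasi-disjoint family of $k$ incident edges. For (i), note that for a degree-$(k-1)$ vertex to fail quasi-disjointness, two of its $\le k-1$ incident edges must share a second vertex $w$; since $k$ and $d$ are fixed, I would sum over the choice of $v$, of $w$, of the ground sets of the (at most $k-1$) incident edges, and then compute the probability that $H_d(n,m_0)$ realizes exactly that incidence pattern at $v$ (i.e. those specific $k-1$ edges are present and no further edge touches $v$). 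The key input is the estimate already developed in the proof of Lemma 2.1: the probability that a prescribed set of $j$ potential edges through $v$ is present and $\deg(v)=j$ is, up to $(1+o(1))$ factors, $\big(\tfrac{d!\,m_0}{n^d}\big)^{j}\cdot \tfrac{e^{\omega}}{n(\ln n)^{k-1}}$-type in magnitude — concretely of order $n^{-1}(\ln n)^{-(k-1)+o(1)}$ when $j=k-1$. Because forcing two of the incident edges to share a second vertex $w$ removes one free vertex, each such configuration is smaller by a factor $\Theta(1/n)$ than the generic degree-$(k-1)$ configuration; the number of choices of $v,w$ and of the $O(1)$-many edge ground sets is $O(n^{2}\cdot n^{(d-2)+ (k-2)(d-1)})$, and one checks the product is $o(1)$. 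I would use the identity ${N\choose m-\ell}={N\choose m}\tfrac{(m)_\ell}{(N-m+\ell)_\ell}$ and the bound ${N\choose m-\ell}\le {N\choose m}\big(\tfrac{m}{N-m}\big)^\ell$ exactly as in Section 2 to make these ratios precise, and I would also need (as in Lemma 2.1) that w.h.p. there are no vertices of degree $<k-1$, so "degree $k-1$" can be treated as the minimum-degree case.

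For part (ii), the cleanest route is to observe that if a vertex $v$ of degree $r\ge k$ has \emph{no} quasi-disjoint subfamily of size $k$, then among its $r$ incident edges one cannot pick $k$ that are pairwise intersecting only in $\{v\}$; by a simple combinatorial/greedy argument this forces the existence of a small "blocking" structure — e.g. a set $W$ of at most $k-1$ vertices (other than $v$) such that every incident edge of $v$ meets $W$, \emph{or} at least two incident edges sharing a second vertex together with enough further coincidences to prevent extending a partial quasi-disjoint family to size $k$. In either case there are at least two incident edges of $v$ with a coincidence outside $\{v\}$. So I would union-bound over: the vertex $v$; a bound $r\le r_{\max}$ on its degree (here one first shows w.h.p. $\Delta(H_d(n,m_0))=O(\log n/\log\log n)$ or even just $o(\log n)$, again by a first-moment computation on the degree distribution as in Lemma 2.1); the at-most-$(k-1)$-element blocking set $W$ (or the shared vertex $w$); and the ground sets of the $\le r$ incident edges, at least one pair of which is constrained to reuse a vertex. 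The forced coincidence again costs a factor $\Theta(1/n)$ relative to the unconstrained degree-$r$ probability $\Theta\big((\log n/n)^{r}\big)\cdot$(geometric tail), and summing over $r$ the geometric series converges, so the whole expression is $o(1)$. The conversion from this to "every vertex of degree $\ge k$ has a quasi-disjoint family of size $k$" is just the contrapositive.

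Two points I would be careful about. First, the combinatorial reduction in (ii) — "no size-$k$ quasi-disjoint family $\Rightarrow$ a small forced coincidence" — should be stated and proved as a clean deterministic sub-claim; the honest statement is that if $v$ has degree $r$ and no $k$ of its edges are pairwise quasi-disjoint, then the "intersection graph modulo $v$" on its incident edges has the property that its maximum set of mutually non-adjacent (disjoint-outside-$v$) edges is $\le k-1$, and one extracts from this at least one pair of incident edges sharing a vertex $\ne v$, which is all the union bound actually needs. Second, I must make sure the $(1+o(1))$ error factors from the Lemma 2.1-style estimates are uniform over all the $O(1)$-many edge ground sets and over $r$ up to $o(\log n)$, which is fine because each individual estimate is uniform in the relevant range and there are only $\mathrm{poly}(n)$ terms while each carries a superpolynomially small — in fact at least $\Theta(1/n)$ — saving.

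The main obstacle I expect is not any single computation but getting the deterministic reduction in part (ii) right and then bookkeeping the exponents: one must verify that each extra reused vertex genuinely buys a factor $\Theta(1/n)$ after accounting for the $n^{d-1}$-ish ways to choose each new edge's remaining vertices, so that the union-bound sum over all $(v,W,\text{edge ground sets})$ and over all degrees $r=o(\log n)$ is $o(1)$ rather than merely bounded. Once the $d=3$, general-$k$ case is written cleanly the fixed-$d\ge 3$ case is identical with $d-1$ in place of $2$ in the edge-choice counts.
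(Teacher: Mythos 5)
Your part (i) is fine and is essentially the paper's computation restricted to degree-$(k-1)$ vertices: a forced second shared vertex costs a factor $\Theta(1/n)$ against an expected count of degree-$(k-1)$ vertices that is only $e^{\omega}=e^{o(\ln\ln n)}$, so the first moment is $o(1)$. The genuine gap is in part (ii), where you ultimately reduce to ``at least one pair of incident edges sharing a vertex $\ne v$'' and assert this ``is all the union bound actually needs.'' It is not. The expected number of pairs of edges of $H_d(n,m_0)$ sharing two or more vertices is of order $\binom{m_0}{2}\cdot\Theta(n^{-2})=\Theta\bigl((\ln n)^2\bigr)$, so the expected number of vertices having some coincident pair of incident edges tends to infinity (equivalently, summing your $O(r^2/n)$ coincidence cost against the degree distribution with mean $\sim\ln n$ gives $\Theta((\ln n)^2)$, not $o(1)$). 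Such vertices genuinely exist w.h.p.; the lemma survives only because a vertex of degree $\ge k$ is allowed coincidences as long as it still contains $k$ pairwise quasi-disjoint edges. So a single extracted coincidence cannot close the union bound, and no bound on the maximum degree rescues it. (Your auxiliary claim that w.h.p. $\Delta=O(\log n/\log\log n)$ or $o(\log n)$ is also false — the mean degree is $\sim\ln n$, so $\Delta=\Theta(\ln n)$ — but this is secondary.)

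The missing idea, which is how the paper argues, is to use the full strength of maximality: classify a bad vertex $v$ by the size $j\le k-1$ of a \emph{maximum} quasi-disjoint family among its incident edges and by $\ell=\deg(v)-j\ge1$. Maximality forces \emph{every one} of the remaining $\ell$ edges to meet the fixed set of $j(d-1)\le(k-1)(d-1)$ non-$v$ vertices of that family, and each such constrained edge costs a factor $O(\ln n/n)$ rather than just one factor $O(1/n)$ overall. This yields $P(j,\ell)\le_b \frac{e^{\omega}}{n}\bigl(C\ln n/n\bigr)^{\ell}$, which decays geometrically in $\ell$, handles all degrees simultaneously without any maximum-degree input, and sums over $j\le k-1$, $\ell\ge1$ to $O(e^{\omega}\ln n/n)=o(1)$. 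Your own first formulation of the blocking structure (``every incident edge of $v$ meets $W$'') is the right one — with $|W|\le (k-1)(d-1)$ rather than $k-1$ — and if you carry that constraint for all $\ell$ extra edges through the union bound instead of weakening to one shared vertex, the argument closes and coincides with the paper's proof.
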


\begin{proof}
Note that both parts of this lemma are trivially true for $k=1$ and part \textbf{(i)} is also trivially true for $k=2$. Let $X(j, \ell)$ be the number of vertices whose maximum quasi-disjoint set has size $j$ and whose degree is $j+\ell$. To prove this lemma, it suffices to show that w.h.p. for $j \leq k-1$ and $\ell \geq 1$, we have that $X(j, \ell)=0$, which is shown by a first moment argument. Now
\begin{equation*}
E[X(j, \ell)] = n P(j, \ell),
\end{equation*}
where $P(j, \ell)$ is the probability that a generic vertex $v$ has a maximum quasi-disjoint set of size $j$ and whose degree is $j + \ell$. To bound this probability, note that $v$ has a set of $j$ quasi-disjoint edges and each of the remaining $\ell$ edges must have at least one vertex from the $j(d-1)$ neighbors from the quasi-disjoint edges; further the remaining $m_0-j -\ell$ edges do not include $v$. Hence
\begin{align*}
P(j, \ell) &\leq {{{n-1 \choose d-1} \choose j}} {{{j (d-1)\choose 1} {n -2 \choose d-2} \choose \ell}} \frac{{{n \choose d}-{n-1 \choose d-1} \choose m_0 - j - \ell}}{{{n \choose d} \choose m_0}} \\& \leq n^{(d-1)j} \left( \frac{ e \, j (d-1) n^{d-2} }{ \ell (d-2)!} \right)^{\ell} \left( \frac{m_0}{{n \choose d}-{n-1 \choose d-1} - m_0} \right)^{j+\ell} \frac{{{n \choose d}-{n-1 \choose d-1} \choose m_0}}{{{n \choose d} \choose m_0}}.
\end{align*}
We gave sharp asymptotics for the last fraction in \eqref{eqn: 7.28.2}. Here and throughout the rest of the paper, we will use $f \leq_b g$ for $f=O(g)$ when the formula for $g$ becomes too bulky. Therefore
\begin{equation*}
P(j, \ell) \leq_b (\ln n)^j \frac{e^{\omega}}{n (\ln n)^{k-1}} \left( \frac{ e \, j \, (d-1) n^{d-2} m_0}{\ell \, (d-2)! \left( {n \choose d}-{n-1 \choose d-1} - m_0 \right)} \right)^\ell;
\end{equation*}
whence
\begin{equation*}
P(j, \ell) \leq_b \frac{e^{\omega}}{n} \left( C \, \frac{\ln n}{n}  \right)^{\ell},
\end{equation*}
for $C=2(k-1)(d-1)$ (independent of $j \leq k-1$ and $\ell \geq 1$). Thus
\begin{equation*}
\sum_{j=0}^{k-1} \sum_{\ell \geq 1} E[X(j, \ell)] \leq_b e^{\omega} \sum_{\ell \geq 1} \left(  C \, \frac{\ln n}{n} \right)^{\ell} \leq_b e^{\omega} \frac{\ln n}{n} \to 0,
\end{equation*}
which completes the proof of the lemma.
\end{proof}

\begin{lemma}\label{cor: 7.28}
W.h.p. each vertex of $H_d(n, \tau_k)$ has a quasi-disjoint set of incident edges with size at least $k$. 
\end{lemma}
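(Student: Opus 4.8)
The plan is to combine the three "likely events" already set up in the paper and push them forward along the process from $m_0$ to $\tau_k$. First I would fix $\omega = \ln\ln\ln n$ (or any $\omega\to\infty$ with $\omega = o(\ln\ln n)$) and set $m_0 = \tfrac{n}{d}(\ln n + (k-1)\ln\ln n - \omega)$. By Lemma \ref{lemma: 6.5.1} we have w.h.p.\ $\tau_k \in [m_0, m_1]$; in particular $\tau_k \geq m_0$, so $H_d(n,\tau_k)$ is obtained from $H_d(n,m_0)$ by adding edges. Adding edges only enlarges quasi-disjoint sets (a quasi-disjoint set of incident edges of $v$ in $H_d(n,m_0)$ is still quasi-disjoint in $H_d(n,\tau_k)$, since the pairwise-intersection condition is preserved), so any vertex that already has a quasi-disjoint set of size $\geq k$ at time $m_0$ retains one at time $\tau_k$. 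By Lemma \ref{lemma: 6.5.2}\textbf{(ii)}, w.h.p.\ every vertex of $H_d(n,m_0)$ with degree $\geq k$ has a quasi-disjoint set of size $\geq k$; hence the only vertices that could fail the conclusion at time $\tau_k$ are those whose degree in $H_d(n,m_0)$ is $\leq k-1$.

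By Lemma \ref{lemma: 6.5.1}\textbf{(i)}, w.h.p.\ the minimum degree of $H_d(n,m_0)$ is exactly $k-1$, so the troublesome vertices are precisely the (at most $\tfrac{3}{2} e^\omega/(k-1)!$) vertices of degree exactly $k-1$ in $H_d(n,m_0)$. By Lemma \ref{lemma: 6.5.2}\textbf{(i)}, w.h.p.\ each such vertex $v$ has its $k-1$ incident edges forming a quasi-disjoint set. So for each such $v$, it suffices that by time $\tau_k$ the process adds at least one further edge incident to $v$ that meets each of $v$'s existing edges only in $\{v\}$ --- then $v$ has degree $\geq k$ with a quasi-disjoint set of size $\geq k$, and we may conclude. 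In fact I would argue more cleanly: a degree-$(k-1)$ vertex $v$ at time $m_0$ reaches degree $k$ at time $\tau_k$ (by definition of $\tau_k$ as the stopping time for minimum degree $k$, once we know $\tau_k \in [m_0,m_1]$); the one extra edge it gains is a uniformly random edge through $v$, and the probability that this edge intersects one of the fixed $(k-1)(d-1)$ neighbors of $v$ (beyond $v$ itself) is $O\big((\ln n)/n\big)$, since the number of edges through $v$ meeting that neighbor set is $O(n^{d-2})$ out of $\binom{n-1}{d-1} = \Theta(n^{d-1})$.

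The one subtlety is that "the extra edge at $v$" is not literally a single uniform edge --- between $m_0$ and $\tau_k$ the vertex $v$ could in principle gain several edges, and the relevant conditioning (on the event $\tau_k \in [m_0,m_1]$, on the configuration at $m_0$, etc.) must be handled carefully. The clean way around this, and the step I expect to require the most care, is to phrase everything as a statement about $H_d(n,m_1)$ rather than about the process: w.h.p.\ $\tau_k \leq m_1$, so it is enough to show that w.h.p.\ \emph{every} vertex of $H_d(n,m_1)$ has a quasi-disjoint set of incident edges of size $\geq k$; and this is a "decreasing-in-edges-removed / increasing-in-edges-added" property, monotone under the process, so the conclusion for $H_d(n,\tau_k)$ follows from monotonicity together with $\tau_k \leq m_1$ once we also know min-degree $\geq k$ at $\tau_k$. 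To prove the statement for $H_d(n,m_1)$ I would run a first-moment argument exactly parallel to Lemma \ref{lemma: 6.5.2}: bound $n\,P(\text{a fixed vertex } v \text{ has max quasi-disjoint set of size } j \text{ and degree } \geq j+1)$ for $j \leq k-1$, using $m_1 = \tfrac{n}{d}(\ln n + (k-1)\ln\ln n + \omega)$ in place of $m_0$, and check the sum over $j$ and over the number of "bad" edges still tends to $0$ --- the extra $e^{2\omega}$-type factors coming from $m_1$ versus $m_0$ are harmless because the $(\ln n / n)^\ell$ gain from each edge forced through the neighbor set of a quasi-disjoint core dominates. Finally I would assemble: on the intersection of the w.h.p.\ events $\{\tau_k \in [m_0,m_1]\}$, $\{\text{min degree of } H_d(n,m_1) \text{ is } \geq k\}$ (part (ii) of Lemma \ref{lemma: 6.5.1}), and $\{\text{every vertex of } H_d(n,m_1) \text{ has a size-}k\text{ quasi-disjoint set}\}$, monotonicity gives the same for $H_d(n,\tau_k)$, completing the proof.
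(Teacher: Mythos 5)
Your opening reduction is sound and matches the paper's: quasi-disjointness of a fixed edge set is preserved under edge addition, so by Lemma \ref{lemma: 6.5.2} the only vertices that can fail at time $\tau_k$ are those of degree $k-1$ in $H_d(n,m_0)$, and for each such $v$ one needs an edge arriving after $m_0$ that meets $v$'s $(k-1)(d-1)$ neighbours only in $v$. The gap is in how you close this: your ``clean way around'' the conditioning issue is a monotonicity step that runs in the wrong direction. The property ``every vertex has a quasi-disjoint set of incident edges of size at least $k$'' is \emph{increasing} (closed under adding edges), so proving it w.h.p.\ for $H_d(n,m_1)$ and knowing $\tau_k\leq m_1$ tells you nothing about the subgraph $H_d(n,\tau_k)$. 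The configuration the lemma must exclude is exactly one where at time $\tau_k$ some vertex $v$ has degree $k$ but two of its edges share a second vertex (maximum quasi-disjoint set of size $k-1$), while a later edge, present by time $m_1$, repairs $v$; neither ``min-degree $\geq k$ at $\tau_k$'' nor the $m_1$-statement rules this out. Nor can you patch this by a union bound over all snapshots $m\in[m_0,m_1]$: the single-snapshot first moment from the Lemma \ref{lemma: 6.5.2} computation is of order $e^{\pm\omega}\ln n/n$, and there are $\Theta(n\omega)$ values of $m$ in the window, so the sum diverges.

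The conditioning subtlety you flagged (the edges $v$ gains in $(m_0,\tau_k]$ are not one uniform edge) is the real content, and the paper resolves it dynamically rather than statically. Let $B_n$ be the intersection of the likely events ($\tau_k\in[m_0,m_1]$, minimum degree $k-1$ at $m_0$, at most $\tfrac{3}{2}e^{\omega}/(k-1)!$ vertices of degree $k-1$ at $m_0$, and both parts of Lemma \ref{lemma: 6.5.2}), and condition on the set $V_0$ of degree-$(k-1)$ vertices at time $m_0$. On $B_n$, failure at $\tau_k$ forces some edge added at a step $m\in[m_0,m_1]$ of the process to contain both a vertex $v\in V_0$ and one of its $(k-1)(d-1)$ neighbours in $H_d(n,m_0)$; given the history, the edge added at each step is uniform over the remaining potential edges, so this event has probability at most $|V_0|(k-1)(d-1)\binom{n-2}{d-2}\big/\bigl(\binom{n}{d}-m\bigr)=O(e^{\omega}n^{-2})$ per step, and summing over the $O(n\omega)$ steps gives $O(\omega e^{\omega}/n)\to 0$. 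Your first paragraph combined with this per-step union bound yields a correct proof; the $m_1$-based workaround does not.
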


\begin{proof}
This lemma is trivially true for $k=1$. Suppose that $k \geq 2$. Let $A_n$ be the event that $H_d(n, \tau_k)$ has a vertex that does {\em not} have a quasi-disjoint set of edges with size at least $k$; we wish to show that $P(A_n) \to 0$. Let $m_0, m_1$ be as defined in Lemma \ref{lemma: 6.5.1}. We have proved that w.h.p. $\tau_k \in [m_0, m_1]$ and that $H_d(n,m_0)$ does not have vertices of degree less than $k-1$. Further, w.h.p. the number of degree $k-1$ vertices in $H_d(n, m_0)$ is less than $\tfrac{3 e^{\omega}}{2(k-1)!}$ (Lemma \ref{lemma: 6.5.1}). In addition, w.h.p. $H_d(n, m_0)$ has the two properties of the previous lemma (Lemma \ref{lemma: 6.5.2}). Let $B_n$ be the intersection of these four likely events. To prove the lemma, it suffices to show that $P(A_n \cap B_n) \to 0$. 

Let $\tilde{V}_0$ be the vertex set of vertices of degree $k-1$ in $H_d(n, m_0)$. Note that
\begin{align*}
P(A_n \cap B_n) &= \sum_{V_0 \subset [n], \, |V_0| \leq 3 e^{\omega}/(2(k-1)!)} P(A_n\cap B_n \cap \{\tilde{V}_0 = V_0\}).
\end{align*}
On the event that $A_n$ and $B_n$ occur and $\tilde{V}_0=V_0$, necessarily some edge $e_m$ is added in the hypergraph process at some step $m \in [m_0, m_1]$ such that $e_m$ includes both a vertex $v \in V_0$ and one of $v'$s $(k-1)(d-1)$ neighbors in $H_d(n, m_0)$. Thus 
\begin{align*}
P(A_n \cap B_n \cap \{\tilde{V}_0 = V_0\}) &\leq \sum_{m=m_0}^{m_1} \frac{{|V_0| \choose 1}{(k-1)(d-1) \choose 1}{n-2 \choose d-2}}{{n \choose d}-m} P(\tilde{V}_0=V_0) \\& \leq_b (m_1-m_0) e^{\omega} \frac{{n-2 \choose d-2}}{{n \choose d}-m_1} P(\tilde{V}_0 = V_0) \leq_b \frac{\omega e^{\omega}}{n} P(\tilde{V}_0 = V_0).
\end{align*}
 Therefore
\begin{equation*}
P(A_n \cap B_n) \leq_b \frac{\omega e^{\omega}}{n} \sum_{V_0 \subset [n], \, |V_0| \leq 3 e^{\omega}/(2(k-1)!)} P(\tilde{V}_0=V_0) \leq \frac{\omega e^{\omega}}{n} \to 0.
\end{equation*}

\end{proof}

\section{W.h.p. $H_d(n, \tau_k)$ is $k$-connected}

Now that we have sufficient knowledge about the structure of $H_d(n, m_0)$ and low-degree vertices in $H_d(n, \tau_k)$, we can prove our main Theorem.

{
\renewcommand{\thetheorem}{\ref{theorem: 1.25}}
\begin{theorem}
W.h.p. $H_d(n, \tau_k)$ is $k$-connected. In short, w.h.p. $\tau_k = T_k$.
\end{theorem}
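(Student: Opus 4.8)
The plan is to condition on the information collected in the preceding sections and then rule out, by a union bound, a separating $(k-1)$-set at time $\tau_k$. Fix $\omega:=\ln\ln\ln n$ and let $m_0,m_1$ be the corresponding quantities from Lemma~\ref{lemma: 6.5.1} (so $m_0$ is also the $m_0'$ of Lemma~\ref{lemma: 7.28.1}). Let $B_n$ be the intersection of the following w.h.p.\ events: (a) $\tau_k\in[m_0,m_1]$ (Lemma~\ref{lemma: 6.5.1}); (b) deleting any $k-1$ vertices from $H_d(n,m_0)$ leaves a component of order at least $n-(k-1)-\ln n$ (Lemma~\ref{lemma: 7.28.1}); (c) every vertex of $H_d(n,\tau_k)$ has a quasi-disjoint set of at least $k$ incident edges (Lemma~\ref{cor: 7.28}). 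Since $\tau_k\le T_k$ always and $P(B_n)\to 1$, it suffices to show $P(\{\tau_k<T_k\}\cap B_n)\to 0$.

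Suppose then that $\tau_k<T_k$ and $B_n$ holds, and put $m:=\tau_k\in[m_0,m_1]$. Then $H_d(n,m)$ has minimum degree $\ge k$ but, having more than $k$ vertices, is not $k$-connected, so some $(k-1)$-set $W$ has $H_d(n,m)-W$ disconnected; by (b) and $m\ge m_0$ one of its components contains a set of order at least $n-(k-1)-\ln n$, and taking $A$ to be the union of all the other components gives a nonempty set with $a:=|A|\le\ln n$. One must be careful: an edge of $H_d(n,m)$ incident to $A$ and meeting $W$ is destroyed when $W$ is deleted and so may reach anywhere in $[n]$; call the edges incident to $A$ that miss $W$ \emph{type~1} (these lie inside $A$, since $A$ is a union of components of $H_d(n,m)-W$) and those meeting $W$ \emph{type~2}. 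Applying (c) to an arbitrary $v\in A$: of its $\ge k$ quasi-disjoint edges at most $k-1=|W|$ can be of type~2 (pairwise meeting only in $v$, they use disjoint vertices of $W$), so at least one is of type~1; hence every vertex of $A$ lies in a type-1 edge, $a\ge d$, and $A$ contains at least $\lceil a/d\rceil$ type-1 edges. Also every vertex of $A$ has degree $\ge k$, so $\sum_{v\in A}\deg_{H_d(n,m)}(v)\ge ka$, and since each edge incident to $A$ meets $A$ in at most $d$ vertices, the total number of type-1 and type-2 edges incident to $A$ is at least $\lceil ka/d\rceil$.

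Now comes the union bound: over $W$ (at most $\binom{n}{k-1}$ choices), over $A$ with $|A|=a$ (at most $\binom{n}{a}$ choices), and over the value $m=\tau_k\in[m_0,m_1]$ (at most $m_1-m_0+1=O(n\omega)$ values). For fixed $A,W,m$ the potential edges involved fall into three disjoint families: the $\tfrac{a\,n^{d-1}}{(d-1)!}(1+o(1))$ type-1 edges from $A$ to $[n]\setminus(A\cup W)$ (all absent), the $O((\ln n)^d)$ potential type-1 edges inside $A$, and the $O(an^{d-2})$ potential type-2 edges meeting $A$. With $p=m/\binom{n}{d}=(1+o(1))\tfrac{(d-1)!\ln n}{n^{d-1}}$, the usual binomial estimates (as for \eqref{eqn: 7.28.2}) bound the probability of the configuration by $n^{-a}(\ln n)^{O(a)}$ times $\big(C(\ln n)^{d+1}/n^{d-1}\big)^{\lceil a/d\rceil}\big(Ca\ln n/n\big)^{\lceil ka/d\rceil-\lceil a/d\rceil}$ for a constant $C=C(d,k)$, the case of the fewest type-1 edges being the dominant one; here the first factor cancels $\binom{n}{a}\le n^a$. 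The surviving power of $n$ is $-(d-2)\lceil a/d\rceil-\lceil ka/d\rceil+o(1)$, which decreases in $a$, so the sum over $a\in[d,\ln n]$ is dominated by $a=d$; and $(d-2)\lceil d/d\rceil+\lceil kd/d\rceil=d-2+k\ge k+1$ because $d\ge 3$. Multiplying by the $O(n^{k-1})$ choices of $W$ and the $O(n\omega)$ values of $\tau_k$ then yields $P(\{\tau_k<T_k\}\cap B_n)=O\big(\omega\,n^{-(d-2)+o(1)}\big)\to 0$, which is the theorem.

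The main obstacle is exactly this exponent bookkeeping. Enumerating the possible small sides $A$ costs a factor $\approx n^{a}$; to beat it one must spend simultaneously the ``$A$ is essentially cut off'' estimate, the ``$A$ contains $\ge\lceil a/d\rceil$ internal edges'' estimate and the ``$A$ meets $\ge\lceil ka/d\rceil$ edges in all'' estimate --- the last two split apart because, unlike for graphs, an edge at a vertex of $A$ can pass through $W$ and escape $A\cup W$ --- and still be left with enough of a power of $n$ to absorb the choice of $W$ and the $\Theta(n\omega)$ uncertainty in $\tau_k$. That the margin is positive rests on $d\ge 3$ (so $a\ge d\ge 3$ and $d-2+k\ge k+1$) together with the quasi-disjointness of Lemma~\ref{cor: 7.28}; and the bound $a\le\ln n$ rests entirely on the almost $k$-connectivity of $H_d(n,m_0)$ from Lemma~\ref{lemma: 7.28.1}.
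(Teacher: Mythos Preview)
Your proposal is correct and follows the same overall architecture as the paper's proof: condition on Lemmas~\ref{lemma: 6.5.1}, \ref{lemma: 7.28.1}, and \ref{cor: 7.28}; take a union bound over $m\in[m_0',m_1']$, over the deleted set $W$, and over the small side $A$ (the paper's $S$); then exploit (i) the absence of all ``forbidden'' edges joining $A$ to the big side without meeting $W$, and (ii) the presence of enough edges at $A$ forced by the quasi-disjointness property. The arithmetic closes with the same margin $n^{-(d-2)+o(1)}$ after the $n^{k-1}$ and $n\omega$ factors.

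The one genuine difference is in how step~(ii) is implemented. The paper picks a \emph{single} vertex $w\in S$ and uses only its $k$ quasi-disjoint edges: for some $1\le i\le k$, $i$ of them lie inside $S$ and $k-i$ meet $W$, giving the bound
\[
P_s \le \sum_{i=1}^k \binom{\binom{s-1}{d-1}}{i}\binom{(k-1)\binom{n-2}{d-2}}{k-i}\,\frac{\binom{\binom{n}{d}-s\binom{n'-s}{d-1}}{m-k}}{\binom{\binom{n}{d}}{m}}
\]
directly in $H_d(n,m)$. You instead use \emph{all} of $A$ at once, deriving the two global lower bounds $t_1\ge\lceil a/d\rceil$ and $t_1+t_2\ge\lceil ka/d\rceil$. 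Both routes give a saving of $n^{-(d-2)-k}$ at $a=s=d$ (for you, via $(d-2)\lceil d/d\rceil+\lceil kd/d\rceil=d-2+k$; for the paper, via the $i=1$ term). The single-vertex version is somewhat cleaner: it avoids the two simultaneous constraints on $(t_1,t_2)$, stays entirely inside $H_d(n,m)$ rather than passing through $H_d(n,p)$ heuristics, and makes the dominant term explicit without a monotonicity argument in $a$. Your global count, on the other hand, makes transparent why $a\ge d$ and why every vertex of $A$ is covered by an internal edge.

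One small wording slip: in your list of ``three disjoint families'' the first family should be the \emph{forbidden} potential edges (incident to $A$, missing $W$, not contained in $A$), not ``type-1 edges from $A$ to $[n]\setminus(A\cup W)$''; by your own definition type-1 edges lie inside $A$.
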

\addtocounter{theorem}{-1}
}

\begin{proof}
Let $m'_i = \frac{n}{d} \left( \ln n + (k-1)\ln \ln n + (-1)^{i+1} \ln \ln \ln n \right)$, for $i=0,1.$ By Lemma \ref{lemma: 6.5.1}, we have shown that w.h.p. $\tau_k \in [m'_0, m'_1]$ and by Lemma \ref{cor: 7.28}, each vertex of $H_d(n, \tau_k)$ has a quasi-disjoint edge set of size at least $k$. Further, the property ``whichever $k-1$ vertices are deleted, there remains a giant component which includes all but up to $\ln n$ leftover vertices," denoted $\mathcal{Q}$, is an increasing property (closed under the addition of edges). Therefore, by Lemma \ref{lemma: 7.28.1}, $H_d(n, \tau_k)$ has property $\mathcal{Q}$ as well. To prove this theorem, it suffices to show that the probability that these three likely events hold yet $\tau_k < T_k$ tends to zero. 

To this end, for $m \in [m'_0, m'_1]$, let $C_m$ be the event that $H_d(n, m)$ is not $k$-connected, but each vertex has a quasi-disjoint edge set of size at least $k$ and $H_d(n, m)$ has property $\mathcal{Q}$. To prove this theorem, it suffices to prove that $P(\cup C_m) \to 0$. We will in fact show that
\begin{equation}\label{eqn: 7.28.1}
P(C_m) \leq_b \frac{(\ln n)^{dk+k+1}}{n^{d-1}},
\end{equation}
uniformly over $m \in [m'_0, m'_1]$. In this case, $P(\cup C_m) \leq_b \frac{(\ln n)^{dk+k+2}}{n^{d-2}} \to 0,$ as desired. All that remains is to prove the bound \eqref{eqn: 7.28.1}.

On the event $C_m$, there are $k-1$ vertices, $w_1, ..., w_{k-1}$ such that upon their deletion, there is a component of size $n'-s$ for some $s \in [1, \ln n)$. In fact, since each remaining vertex must have at least one incident edge, we must have that $s \geq d$. Let $S$ be the set of vertices not in this large component. By the union bound over all $k-1$ element sets of $[n]$ and sets $S$, $|S|=s$, as well as symmetry, we have that 
\begin{equation*}
P(C_m) \leq {n \choose k-1} \sum_{s=d}^{\ln n} {n-(k-1) \choose s} P_s,
\end{equation*}
where $P_s$ is the probability that each vertex of $H_d(n,m)$ has a quasi-disjoint edge set of size at least $k$ and that after the deletion of $\{n'+1, ..., n'+(k-1)=n\}$ from $H_d(n, m)$, the vertices $[n'-s]$ form a component; in this case, $S=\{n'-s+1, ..., n'\}$. We now turn to showing that $P_s$ tends to zero sufficiently fast. 

Suppose that $H$ is some hypergraph in the event corresponding to $P_s$. After the deletion of the $k-1$ vertices, we know that vertex $w:=\{n'\}$ from $S$ has at least one incident edge, which necessarily must reside completely within $S$. Further, before deletion, any incident edge to $w$ must be completely contained within $S$ or this edge must contain one of the $k-1$ to-be-deleted vertices. Moreover, there are at least $k$ edges incident to $w$ before the deletion. Therefore
\begin{equation*}
P_s \leq \sum_{i=1}^k P_s(i),
\end{equation*}
where $P_s(i)$ is the corresponding probability to when there are (at least) $i$ incident edges to $w$ contained within $S$ and (at least) $k-i$ incident edges to $w$ that contain at least one of the to-be-deleted vertices. To bound the number of hypergraphs contributing to $P_s(i)$, we choose $i$ potential edges within $S$ containing $w$, $k-i$ potential edges that include $w$ and at least one to-be-deleted vertex; then we choose the remaining $m-k$ edges among all potential edges except those that include a vertex of $S$ and $d-1$ vertices of $[n'-s]$ (which necessarily can not be present). Note that these last chosen edges can include $w$ as well. Therefore
\begin{equation*}
P_s(i) \leq {{s-1 \choose d-1} \choose i} {{1 \choose 1}{k-1 \choose 1}{n-2 \choose d-2} \choose k-i} {{n \choose d}-{s \choose 1}{n'-s \choose d-1} \choose m-k} \frac{1}{{{n \choose d}\choose m}}.
\end{equation*}
First, we use trivial bounds on the first two binomial terms. Then we use the inequality ${N - \ell \choose j} \leq {N \choose j} e^{- j \ell / N}$. Namely, note that
\begin{align*}
P_s(i) &\leq s^{di} \, k^k \, n^{(d-2)(k-i)} \left( \frac{m}{{n \choose d}-s{n'-s\choose d-1}-m}\right)^k \frac{{{n \choose d}-s{n'-s \choose d-1} \choose m}}{{{n \choose d} \choose m}} \\ & \leq_b s^{dk} \, n^{(d-2)(k-1)} \left( \frac{\ln n}{n^{d-1}} \right)^k \exp \left( - \frac{s{n'-s \choose d-1} m}{{n \choose d}}\right).
\end{align*}
Further, for $s \leq \ln n$, we have that 
\begin{equation*}
\frac{s {n'-s \choose d-1}m}{{n \choose d}} = \frac{s \, d \, m}{n} + O\left( \frac{(\ln n)^2}{n} \right) \geq \frac{s \, d \, m'_0}{n} + o(1).
\end{equation*}
Hence
\begin{align*}
P_s(i) &\leq_b (\ln n)^{dk} n^{(d-2)(k-1)} \left( \frac{\ln n}{n^{d-1}}\right)^k \left( \frac{\ln \ln n}{n (\ln n)^{k-1} } \right)^s \\ &\leq (\ln n)^{dk+k} \, n^{2-d-k} \, \left( \frac{\ln \ln n}{n (\ln n)^{k-1}}\right)^s, 
\end{align*}
which no longer depends on $i$. Therefore
\begin{equation*}
P_s \leq_b (\ln n)^{dk+k} \, n^{2-d-k} \, \left( \frac{\ln \ln n}{n (\ln n)^{k-1}}\right)^s,
\end{equation*}
and
\begin{align*}
P(C_m) &\leq_b n^{k-1} \sum_{s=d}^{\ln n} \frac{n^s}{s!} (\ln n)^{dk+k} \, n^{2-d-k} \, \left( \frac{\ln \ln n}{n (\ln n)^{k-1}}\right)^s.
\end{align*}
Now taking on this sum, we find that
\begin{align*}
P(C_m) \leq_b \frac{(\ln n)^{dk+k}}{n^{d-1}} \sum_{s=d}^{\ln n} \frac{1}{s!} \left( \frac{\ln \ln n}{(\ln n)^{k-1}}\right)^s \leq \frac{ (\ln n)^{dk+k}}{n^{d-1}} \exp \left(  \frac{\ln \ln n}{(\ln n)^{k-1}}\right),
\end{align*}
and we find that $P(C_m) \leq_b \tfrac{(\ln n)^{dk+k+1}}{n^{d-1}}$, as desired.

\end{proof}

\section{Sharp Threshold of $k$-connectivity}

As a consequence of Theorem \ref{theorem: 1.25}, for any $m$, we have that 
\begin{align}\label{eqn: 8.23.1}
P(H_d(n, m) \text{ is }k\text{-connected})=P(T_k \leq m) &=P(\tau_k \leq m)+o(1) \nonumber \\&= P(\text{min-deg } H_d(n, m) \geq k)+o(1).
\end{align}

We use this fact to determine the probability that $H_d(n, m)$ and $H_d(n, p)$ is $k$-connected in the critical window.

{
\renewcommand{\thetheorem}{\ref{cor: 1.25}}
\begin{corollary}
\textbf{(i)} Let $m=\frac{n}{d}\left(\ln n + (k-1)\ln \ln n + c_n\right)$, where $c_n \to c \in \mathbb{R}$. W.h.p. $H_d(n, m)$ is $(k-1)$-connected, but not $(k+1)$-connected. Further the probability that $H_d(n, m)$ is $k$-connected tends to $e^{-e^{-c}/(k-1)!}$. \\
\textbf{(ii)} Let $p=(d-1)! \frac{\ln n + (k-1)\ln \ln n + c_n}{n^{d-1}}$, where $c_n \to c \in \mathbb{R}$. W.h.p. $H_d(n, p)$ is $(k-1)$-connected, but not $(k+1)$-connected. Further the probability that $H_d(n, p)$ is $k$-connected tends to $e^{-e^{-c}/(k-1)!}$. 
\end{corollary}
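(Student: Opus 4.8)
The plan is to deduce Corollary~\ref{cor: 1.25} from Theorem~\ref{theorem: 1.25} via the identity \eqref{eqn: 8.23.1}, which reduces every connectivity statement to a statement about the minimum degree. For part \textbf{(i)}, I would first apply \eqref{eqn: 8.23.1} with $k$ replaced by $k-1$, $k$, and $k+1$ in turn. At $m = \frac{n}{d}(\ln n + (k-1)\ln\ln n + c_n)$, the expected number of vertices of degree exactly $k-1$ is $\Theta(1)$ (this is essentially the computation of $E[X]$ already carried out in the proof of Lemma~\ref{lemma: 6.5.1}, now with $-\omega$ replaced by $+c_n$), while the expected number of vertices of degree $\le k-2$ tends to $0$ and the expected number of degree exactly $k$ tends to $\infty$. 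Hence w.h.p.\ the minimum degree is at least $k-1$ but not at least $k+1$; combined with \eqref{eqn: 8.23.1} this gives that w.h.p.\ $H_d(n,m)$ is $(k-1)$-connected but not $(k+1)$-connected.

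The heart of the matter is the limiting probability $e^{-e^{-c}/(k-1)!}$. By \eqref{eqn: 8.23.1}, $P(H_d(n,m)\text{ is }k\text{-connected}) = P(\text{min-deg}\ge k) + o(1) = P(X = 0) + o(1)$, where $X$ counts vertices of degree exactly $k-1$ (using that w.h.p.\ there are no vertices of degree $\le k-2$). So it suffices to show $X \xrightarrow{d} \mathrm{Poisson}(\lambda)$ with $\lambda = e^{-c}/(k-1)!$. I would do this by the method of moments: show that for each fixed $r$, $E[(X)_r] = E[X(X-1)\cdots(X-r+1)] \to \lambda^r$. The $r=1$ and $r=2$ cases are already implicit in Lemma~\ref{lemma: 6.5.1}'s proof (there $E[X] = (1+o(1))\frac{e^\omega}{(k-1)!}$ and $E[X(X-1)] = (1+o(1))E[X]^2$, which with $-\omega \rightsquigarrow c_n \to c$ become $E[X]\to\lambda$ and $E[X(X-1)]\to\lambda^2$); the general factorial moment is the same computation carried out for $r$ distinguished vertices, where the dominant contribution comes from the configuration in which each of the $r$ vertices lies in $k-1$ edges pairwise meeting only at that vertex, and all cross-terms (shared edges among the $r$ vertices) contribute lower-order terms exactly as the $i\ge 1$ terms did in \eqref{eqn: 7.28.3}. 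Convergence of all factorial moments to those of $\mathrm{Poisson}(\lambda)$ yields $X\xrightarrow{d}\mathrm{Poisson}(\lambda)$ and hence $P(X=0)\to e^{-\lambda}$.

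For part \textbf{(ii)}, I would transfer everything from $H_d(n,m)$ to $H_d(n,p)$. With $p = (d-1)!\frac{\ln n + (k-1)\ln\ln n + c_n}{n^{d-1}}$ one has $p\binom{n}{d} = \frac{n}{d}(\ln n + (k-1)\ln\ln n + c_n)(1+O(1/n)) = m(1+o(1))$, so $e(H_d(n,p))$ is concentrated around $m$. Since $k$-connectivity, $(k-1)$-connectivity, and non-$(k+1)$-connectivity are monotone (the first two increasing, the last decreasing) under edge addition, sandwiching $H_d(n,p)$ between $H_d(n,m_-)$ and $H_d(n,m_+)$ for $m_\pm = m(1\pm o(1))$ chosen so that $c_n$ is perturbed only by $o(1)$ gives the w.h.p.\ structural claims. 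For the exact limiting probability one can instead directly redo the factorial-moment computation for $H_d(n,p)$: $E[X] = n P(\deg(1)=k-1) = n\binom{\binom{n-1}{d-1}}{k-1}p^{k-1}(1-p)^{\binom{n-1}{d-1}-(k-1)}$, which is asymptotic to $\frac{n}{(k-1)!}\big(\frac{n^{d-1}}{(d-1)!}p\big)^{k-1} e^{-p\binom{n-1}{d-1}} \to e^{-c}/(k-1)!$, and the higher factorial moments factor because degrees of distinct vertices are asymptotically independent in the Bernoulli model; then $P(H_d(n,p)\text{ is }k\text{-connected}) = P(X=0)+o(1) \to e^{-e^{-c}/(k-1)!}$ just as before.

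The main obstacle is purely the Poisson convergence via factorial moments: one must verify that for every fixed $r$ the cross-terms in $E[(X)_r]$ — those counting configurations where some of the $r$ chosen vertices share incident edges — are lower order, so that $E[(X)_r]\to\lambda^r$. This is a direct but somewhat bookkeeping-heavy extension of the $r=2$ estimate \eqref{eqn: 7.28.3}, and the asymptotic identity ${N \choose m-\ell} = {N \choose m}\frac{(m)_\ell}{(N-m+\ell)_\ell}$ used throughout the paper handles each term; no new idea beyond Lemma~\ref{lemma: 6.5.1} is required.
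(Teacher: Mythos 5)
Your proposal follows essentially the same route as the paper: part \textbf{(i)} is obtained from Theorem~\ref{theorem: 1.25} via \eqref{eqn: 8.23.1} together with a Poisson limit for the number $X$ of degree-$(k-1)$ vertices proved by the method of moments (the paper does exactly this in a separate lemma, showing the cross-term contributions $E_j$, $j\ge 1$, are $O((\ln n)^{-j})$, just as you sketch), and part \textbf{(ii)} is transferred from part \textbf{(i)} by concentration of $e(H_d(n,p))$ around $m$ plus monotonicity of the three properties, which is precisely the paper's conversion argument. Two small caveats. First, in part \textbf{(i)} you infer ``w.h.p.\ the minimum degree is not at least $k+1$'' from the divergence of the expected number of degree-$k$ vertices; a first moment tending to infinity does not by itself give existence w.h.p., so you need either the Chebyshev step (as in Lemma~\ref{lemma: 6.5.1}) or, more simply, that lemma's conclusion that w.h.p.\ $\tau_{k+1}>m$ (and $\tau_{k-1}<m$), which is exactly how the paper argues. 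Second, your alternative in part \textbf{(ii)} of redoing the factorial moments directly in $H_d(n,p)$ is fine for identifying $P(X=0)\to e^{-e^{-c}/(k-1)!}$, but the step $P(H_d(n,p)\text{ is }k\text{-connected})=P(X=0)+o(1)$ is not literally \eqref{eqn: 8.23.1} (which concerns the process/$H_d(n,m)$); it still has to be justified through the same monotone sandwich you set up, at which point the paper's simpler route---sandwiching all three properties between $m_-$ and $m_+$ with $c_n^{\pm}\to c$ and applying part \textbf{(i)}---already finishes the proof.
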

\addtocounter{theorem}{-1}
}

\begin{proof}
\textbf{(i)} First, note that w.h.p. $\tau_{k-1} < m$ and $\tau_{k+1} > m$ by Lemma \ref{lemma: 6.5.1}. Therefore, by Theorem \ref{theorem: 1.25}, w.h.p. $H_d(n, m)$ is $(k-1)$-connected, but not $(k+1)$-connected. In the lemma following this proof, we show that  $X$, the number of vertices of degree $k-1$ in $H_d(n, m)$ is asymptotically Poisson with parameter $e^{-c}/(k-1)!$. Thus
\begin{equation*}
P(\text{min-deg} \, H_d(n, m) \geq k) = P(Poi(e^{-c}/(k-1)!) = 0)+o(1) = e^{-e^{-c}/(k-1)!}+o(1).
\end{equation*}
Using the equation \eqref{eqn: 8.23.1} finishes off the proof.

\textbf{(ii)} This part will be proved from \textbf{(i)} using a standard conversion technique similar to the one used in Lemma \ref{lemma: 7.28.1}. Since the number of edges in $H_d(n,p),$ denoted $e(H_d(n,p))$, is binomially distributed on $N:={n \choose d}$ trials with success probability $p$, we have that
\begin{align*}
e(H_d(n,p)) = Np + O_p\left( \sqrt{N p (1-p) } \right) = \frac{n}{d} \left( \ln n + (k-1) \ln \ln n + c_n \right) + O_p\left(\sqrt{n \ln n}\right).
\end{align*}
Therefore, if $m_{+,-} := \frac{n}{d} \left( \ln n + (k-1) \ln \ln n + c_n^{+,-}  \right),$ where $c_n^{+,-} = c_n \pm \ln n/\sqrt{n}$, then w.h.p. $m_{-} \leq e(H_d(n,p)) \leq m_+$. Using this fact, \eqref{eqn: 3.4.1} becomes
\begin{equation*}
P(H_d(n,p) \in \mathcal{A}) = \sum_{m=m_{-}}^{m_+} P(H_d(n,m) \in \mathcal{A}) P(e(H_d(n,p))=m) + o(1).
\end{equation*}
Notice that $c_n^{+,-}\to c$. By part {\bf (i)}, if $\mathcal{A}$ is $\{(k-1)\text{-connected}\}$ or $\{\text{not }(k+1)\text{-connected}\}$, then $P(H_d(n,m_{+,-})\in \mathcal{A}) \to 1$; also, if $\mathcal{A}=\{k\text{-connected}\}$, then $P(H_d(n,m_{+,-})\in \mathcal{A}) \to e^{-e^{-c}/(k-1)!}$. To finish off the proof that $P(H_d(n,p) \in \mathcal{A})$ has the same limits,  we will use the fact that these properties are monotone.

Now an {\it increasing } ({\it decreasing}) property is a property that is closed under the addition (deletion resp.) of edges. For any increasing property $\mathcal{A}$, we have that $P(H_d(n,m) \in \mathcal{A}) \leq  P(H_d(n,m') \in \mathcal{A})$ for any $m \leq m'$. This fact is obvious when you consider $H_d(n,m)$ and $H_d(n,m')$ to be snapshots of the random hypergraph process $\{H_d(n,\mu)\}_{\mu=0}^{N}$.  Moreover, if $\mathcal{A}$ is an increasing property, then
\begin{equation*}
P(H_d(n,m_-) \in \mathcal{A})+o(1) \leq P(H_d(n,p) \in \mathcal{A}) \leq P(H_d(n,m_+)\in \mathcal{A}) + o(1);
\end{equation*}
further, if $\mathcal{A}$ is decreasing, then the inequalities above are reversed. Consequently, for a monotone property $\mathcal{A}$ such that both $P(H_d(n,m_{+,-})\in \mathcal{A})$ tend to the same number, then $P(H_d(n,p) \in \mathcal{A})$ does as well. 
\end{proof}

\begin{lemma}
Let $m=\frac{n}{d} (\ln n + (k-1) \ln \ln n +c_n )$, where $c_n \to c \in \mathbb{R}$. W.h.p. the number of vertices of degree $k-1$, denoted by $X$, converges in distribution to a Poisson random variable with parameter $e^{-c}/(k-1)!$.
\end{lemma}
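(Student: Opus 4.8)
The plan is to prove convergence in distribution to $\mathrm{Poi}(\lambda)$, $\lambda := e^{-c}/(k-1)!$, by the method of moments: since a Poisson law is determined by its moments, it suffices to show that for every fixed integer $r\geq 1$ the descending factorial moment $E[(X)_r]:=E\bigl[X(X-1)\cdots(X-r+1)\bigr]$ tends to $\lambda^r$. Writing $(X)_r$ as the number of ordered $r$-tuples of distinct vertices each of degree exactly $k-1$ and using symmetry,
\begin{equation*}
E[(X)_r] = (n)_r\,P\bigl(\deg(1)=\deg(2)=\cdots=\deg(r)=k-1\bigr),
\end{equation*}
so the whole problem reduces to a sharp estimate of the probability that $r$ prescribed vertices all have degree $k-1$ --- the natural $r$-vertex generalization of the computation of $P(\deg(1)=\deg(2)=k-1)$ carried out in the proof of Lemma~\ref{lemma: 6.5.1}.

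To estimate that probability I would classify the $m$ present edges according to which subset of $\{1,\dots,r\}$ each one meets, exactly as in \eqref{eqn: 7.28.3} for $r=2$. The dominant contribution comes from the configuration in which no edge meets two or more of $1,\dots,r$, each vertex $i$ lies in exactly $k-1$ of the edges, and the other $m-r(k-1)$ edges avoid all of $1,\dots,r$; recalling that $\binom{n-r}{d-1}$ is the number of potential edges through a fixed $i$ avoiding the remaining $r-1$ distinguished vertices and $\binom{n-r}{d}$ the number avoiding all of them (and noting that the $r(k-1)$ incidence edges so chosen are automatically distinct from one another and from the bulk edges), this contribution to $E[(X)_r]$ equals exactly
\begin{equation*}
(n)_r\prod_{i=1}^{r}\binom{\binom{n-r}{d-1}}{k-1}\cdot\frac{\binom{\binom{n-r}{d}}{\,m-r(k-1)\,}}{\binom{\binom{n}{d}}{m}}.
\end{equation*}
Using ${N\choose m-\ell}={N\choose m}\tfrac{(m)_\ell}{(N-m+\ell)_\ell}$ with $\ell=r(k-1)$ to strip off the missing bulk edges, and then estimating $\binom{\binom{n-r}{d}}{m}\big/\binom{\binom{n}{d}}{m}$ by the argument of \eqref{eqn: 7.28.2} (now with $\binom{n}{d}-\binom{n-r}{d}=r\binom{n-1}{d-1}+O(n^{d-2})$ playing the role of $\binom{n-1}{d-1}$ there, so the exponential factor becomes $e^{-rdm/n}=n^{-r}(\ln n)^{-r(k-1)}e^{-rc_n}(1+o(1))$), the displayed quantity works out to
\begin{equation*}
\bigl(1+o(1)\bigr)\,\frac{1}{((k-1)!)^r}\Bigl(\frac{dm/n}{\ln n}\Bigr)^{r(k-1)}e^{-rc_n};
\end{equation*}
since $dm/n=\ln n+(k-1)\ln\ln n+c_n$ and $c_n\to c$, this tends to $\bigl(e^{-c}/(k-1)!\bigr)^{r}=\lambda^r$.

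The remaining work --- and where the real effort lies --- is to show that every other incidence type is negligible, the $r$-vertex analogue of the $O(n^{-3})$ bound obtained for the $i\geq 1$ terms in the proof of Lemma~\ref{lemma: 6.5.1}. Forcing a single edge to meet two (or more) of the distinguished vertices replaces a factor of order $n^{d-1}$ (the choices for one single-incidence edge) by one of order $n^{d-2}$, while freeing only one additional bulk edge (worth a factor of order $n^{d-1}/\ln n$), for a net loss of order $1/n$; since for fixed $r,d,k$ there are only $O(1)$ possible incidence types, each non-dominant type contributes $O(n^{-r-1})$ to $P(\deg(1)=\cdots=\deg(r)=k-1)$ against the dominant $\Theta(n^{-r})$, and multiplying through by $(n)_r=O(n^r)$ therefore leaves $E[(X)_r]=\lambda^r+o(1)$. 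I expect this bookkeeping --- keeping the estimates uniform over the finitely many types and verifying that every merge of distinguished vertices into a common edge really costs a factor of order $1/n$ --- to be the main, if routine, obstacle. Once $E[(X)_r]\to\lambda^r$ is verified for every $r$, the method of moments (see, e.g., \cite{bollobas book}) gives $X\xrightarrow{d}\mathrm{Poi}(\lambda)$, completing the proof.
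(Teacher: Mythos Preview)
Your approach is essentially the paper's: method of moments, with the $r$th factorial moment written as $(n)_r$ times the probability that $r$ fixed vertices all have degree $k-1$, the dominant contribution coming from the configuration in which the $r(k-1)$ incident edges are pairwise disjoint outside the distinguished set. Your formula for this dominant term,
\[
(n)_r\binom{\binom{n-r}{d-1}}{k-1}^{\!r}\frac{\binom{\binom{n-r}{d}}{m-r(k-1)}}{\binom{\binom{n}{d}}{m}},
\]
is exactly the paper's $E_0$, and your asymptotic evaluation to $\lambda^r$ matches.

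Two remarks on the error terms. First, your informal accounting is off: when one edge is forced to contain two distinguished vertices you lose \emph{two} single-incidence edge factors (one from each vertex's degree count), not one, so the honest comparison is $n^{d-2}\cdot(n^{d-1}/\ln n)\big/(n^{d-1})^2=1/(n\ln n)$; as written, your ``replace $n^{d-1}$ by $n^{d-2}$ and free one bulk edge worth $n^{d-1}/\ln n$'' would actually give a net \emph{gain} of order $n^{d-2}/\ln n$. The conclusion that each merge costs at least a factor $1/n$ is nonetheless correct once the bookkeeping is fixed. Second, the paper avoids this case analysis altogether: it groups configurations only by the total number $r(k-1)-j$ of edges meeting $\{1,\dots,r\}$, then for $j\ge1$ simply drops the degree constraint to get
\[
E_j\le (n)_r\binom{\binom{n}{d}-\binom{n-r}{d}}{r(k-1)-j}\frac{\binom{\binom{n-r}{d}}{m-r(k-1)+j}}{\binom{\binom{n}{d}}{m}}\le_b (\ln n)^{-j},
\]
which is cruder than your $O(1/n)$ per merge but sidesteps all the incidence-type enumeration you flag as the main obstacle.
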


\begin{proof}
We prove this lemma using the method of moments (see \cite{bollobas book} for a description of this method). In order to prove the lemma, it suffices to show that for each $r \in \mathbb{N}$ (fixed), 
\begin{equation*}
\lim_{n \to \infty} E[(X)_r] = \left( \frac{e^{-c}}{(k-1)!}\right)^r.
\end{equation*}
To compute the $r$'th factorial moment, note that
\begin{equation*}
E[(X)_r] = E_0 + E_1 + \ldots + E_{(k-1)r},
\end{equation*}
where $E_j$ is the expected number of ordered $r$-tuples of vertices of degree at most $k-1$ such that there are exactly $(k-1) \, r - j$ edges containing at least one of these vertices. We will see that the terms other than $E_0$ are negligible. 

Let's first consider $E_0$. Since the number of edges is $(k-1) r$, each of these $r$ vertices have degree $k-1$ and must necessarily not be adjacent; so
\begin{equation*}
E_0 = (n)_r { {n-r \choose d-1} \choose k-1}^r \frac{{{n-r \choose d}\choose m-r(k-1)}}{{{n \choose d} \choose m}} = \left( 1 + O\left( n^{-1} \right) \right) n^r \left( \frac{n^{(k-1)(d-1)}}{(k-1)! ( (d-1)! )^{k-1} } \right)^r \frac{{{n-r \choose d}\choose m-r(k-1)}}{{{n \choose d} \choose m}}.
\end{equation*}
Taking on this last factor, we have that
\begin{equation*}
{{{n-r \choose d} \choose m-r(k-1)}} = {{{n-r \choose d} \choose m}} \frac{(m)_{r(k-1)}}{\left( {n-r \choose d} - m + r(k-1) \right)_{r(k-1)}}.
\end{equation*}
By sharply approximating this last fraction, we have that
\begin{equation*}
E_0 = \left( 1 + O \left( \frac{\ln \ln n}{\ln n} \right) \right) \left( \frac{ n \, (\ln n)^{k-1} }{(k-1)!} \right)^r \frac{{{n-r\choose d} \choose m}}{{{n \choose d} \choose m}}.
\end{equation*}
Note that
\begin{equation*}
{{n-r \choose d} \choose m} = \frac{1}{m!} \left({n-r \choose d} \right)^m \prod_{i=0}^{m-1} \left(1 - \frac{i}{{n-r \choose d}}\right) = \frac{1}{m!} \left( {n-r \choose d}\right)^m \left( 1 + O\left( \frac{m^2}{n^d}\right)\right).
\end{equation*}
We also have $d \geq 3$ so that
\begin{equation*}
{{n-r \choose d} \choose m} = \frac{1}{m!} {n-r \choose d}^m \left( 1 + O\left( \frac{(\ln n)^2}{n} \right) \right);
\end{equation*}
similarly, we have that
\begin{equation*}
{{n \choose d} \choose m} = \frac{1}{m!} { n \choose d}^m \left( 1 + O\left(\frac{(\ln n)^2}{n} \right) \right).
\end{equation*}
Further 
\begin{equation*}
\left(\frac{{n-r \choose d}}{{n \choose d}} \right)^m = \left( \left( \frac{n-r}{n} + O\left(\frac{1}{n^2}\right) \right)^d \right)^m = \exp \left( d \, m \left( - \frac{r}{n} + O(n^{-2})\right) \right),
\end{equation*}
and
\begin{equation}\label{eqn: 1.28.1}
\frac{{{n-r \choose d} \choose m}}{{{n \choose d} \choose m}} = \left( 1 + O\left( \frac{(\ln n)^2}{n} \right) \right) e^{- r d m / n} = \left( 1 + O\left( \frac{(\ln n)^2}{n} \right) \right) \left( \frac{e^{-c_n}}{n (\ln n)^{k-1} } \right)^r.
\end{equation}
Thus
\begin{equation*}
E_0 = \left( 1 + O \left( \frac{\ln \ln n}{\ln n} \right) \right) \left( \frac{e^{-c_n}}{(k-1)!} \right)^r.
\end{equation*}

Now we turn to $E_j$, for $j \geq 1$. Note that $E_j$ is less than the expected number of such $r$-tuples where these $r$ vertices have exactly $r(k-1)-j$ adjacent edges (we dropped the degree condition). Then, for $j \geq 1$, we have that
\begin{equation*}
E_j \leq (n)_r {{n \choose d} - {n-r \choose d} \choose (k-1)r-j} \frac{{{n-r \choose d} \choose m-(k-1)r+j}}{{{n \choose d} \choose m}}.
\end{equation*}
In particular, we have that
\begin{equation*}
E_j \leq n^r \left( {n \choose d} - {n-r \choose d} \right)^{(k-1)r-j} \left( \frac{m}{{n-r \choose d} - m} \right)^{(k-1)r-j} \frac{{{n-r \choose d} \choose m}}{{{n \choose d} \choose m}}.
\end{equation*}
Using the fact that
\begin{equation*}
{n \choose d}-{n-r \choose d} = r {n-r \choose d-1} + O(n^{d-2}) \leq r n^{d-1}
\end{equation*}
along with the bound \eqref{eqn: 1.28.1}, we have that
\begin{equation*}
E_j \leq_b n^{r + (d-1)[(k-1)r - j]} \left( \frac{\ln n}{n^{d-1}} \right)^{(k-1)r - j} \left( \frac{1}{n (\ln n)^{k-1}}\right)^r = \frac{1}{(\ln n)^j},
\end{equation*}
which completes the proof of the lemma.
\end{proof}

\addcontentsline{toc}{section}{References}

\end{document}